\definecolor{webgreen}{rgb}{0,0,1}
\definecolor{recrown}{rgb}{1,.2,.6}
\begin{document}
\newtheorem{theorem}{Theorem}
\newtheorem{corollary}[theorem]{Corollary}
\newtheorem{lemma}[theorem]{Lemma}
\theoremstyle{definition}
\newtheorem{example}{Example}
\newtheorem{examples}{Examples}
\newtheorem*{notation}{Notation}
\theoremstyle{remark}
\newtheorem*{remarks}{\bf Remarks}
\theoremstyle{thmx}
\newtheorem{thmx}{\bf Theorem}
\renewcommand{\thethmx}{\text{\Alph{thmx}}}
\newtheorem{lemmax}{Lemma}
\renewcommand{\thelemmax}{\text{\Alph{lemmax}}}
\theoremstyle{thmx}
\newtheorem{property}{\bf Property}
\renewcommand{\theproperty}{\text{\Alph{property}}}
\theoremstyle{definition}
\newtheorem*{definition}{Definition}
\numberwithin{theorem}{section}
\numberwithin{example}{section}
\newtheorem{remark}{\bf Remark}
\newcommand{\C}{\mathcal{C}_{x,y}}
\newcommand{\s}{\mathbb{S}}
\title[]{\bf Round and sleek subspaces of linear metric spaces and metric spaces}
\markright{}
\subjclass[2010]{Primary 54E35;  46A55; 52A07; 46B20}
\keywords{Round metric; Sleek metric; Linear metric space; Strict convexity; Metric space;  Metric convexity}
\author{Jitender Singh$^{\dagger,*}$}
\address{$~^\dagger$ Department of Mathematics, Guru Nanak Dev University, Amritsar-143005, India}
\author{T. D. Narang$^\ddagger$}
\address{$~^\ddagger$ Department of Mathematics, Guru Nanak Dev University, Amritsar-143005, India}
\footnotetext[2]{$^{,*}$Corresponding author: jitender.math@gndu.ac.in}
\footnotetext[3]{tdnarang1948@yahoo.co.in}
\date{}
\maketitle
\begin{abstract}
In the recent work [Metrically round and sleek metric spaces, \emph{The Journal of Analysis} (2022), pp 1--17], the authors proved some results on metrically round and sleek linear metric spaces and metric spaces. In continuation, the present article discusses more results on such spaces along with identification of round and sleek subsets of  linear metric spaces and metric spaces in the subspace topology.
\end{abstract}
\section{Introduction}
Let $(X,d)$ be a metric space having at least two points and having no isolated point. For any point $x$ in $X$ and a positive real number $r$, let $B_d(x,r)$ and $B_d[x,r]$ respectively denote the open and the closed balls centered at $x$ and radius $r$ in $X$.

Art\'emiadis \cite{artemiadis} and Wong \cite{wong} discussed those metric spaces in which closure of every open ball is the corresponding closed ball, that is, $\bar{B}_d(x,r)=B_d[x,r]$ for all $x\in X$ and $r>0$. Nathanson  \cite{Na} called such a metric $d$ as a \emph{round metric} for $X$.  A round metric space was then defined to be the one whose topology is induced by a round metric. Among other results in \cite{vasilev}, Vasil'ev proved in particular that a linear metric space  is strictly convex if and only if it has strict ball convexity and is round.

Analogously, Kiventidis \cite{TH} discussed those metric spaces in which interior of every closed ball is the corresponding open ball, that is, $B_d^{\circ}[x,r]=B_d(x,r)$ for all $x\in X$ and $r>0$. Singh and Narang \cite{JSTD2020} called such a metric $d$ as a \emph{sleek metric} for $X$.   A sleek metric space was then defined to be the one whose topology is induced by a sleek metric. Whereas all normed linear spaces are round as well as sleek with respect to the metric induced by the underlying norm, it is not so in the case of linear metric spaces and metric spaces (see \cite{artemiadis,wong,vasilev,TH,JSTD2020}).

Although the two concepts of roundness and sleekness are independent of each other, both of these concepts help in characterizing strictly convex linear metric spaces (see \cite{vasilev,JSTD2020}).  Infact from the characterization results in \cite{vasilev,JSTD2020} regarding strictly convex linear metric spaces via roundness or sleekness and the strict ball convexity, it follows that the classes of round linear metric spaces and sleek linear metric spaces lie between the class of strictly convex linear metric spaces and the class of linear metric spaces. On the other hand, the class of round metrizable spaces lies between complete convex metric spaces and metric spaces, and that of  sleek metrizable spaces lie between externally convex complete or externally convex round metric spaces and metric spaces. These implications can be understood from Fig.~\ref{F1} in which LMS stands for linear metric space and MS stands for metric space, and each arrowhead denotes the direction of the implication ($\subset$) of containment of the corresponding class of spaces.
\begin{figure}[h!!!]
\includegraphics[width=0.75\textwidth]{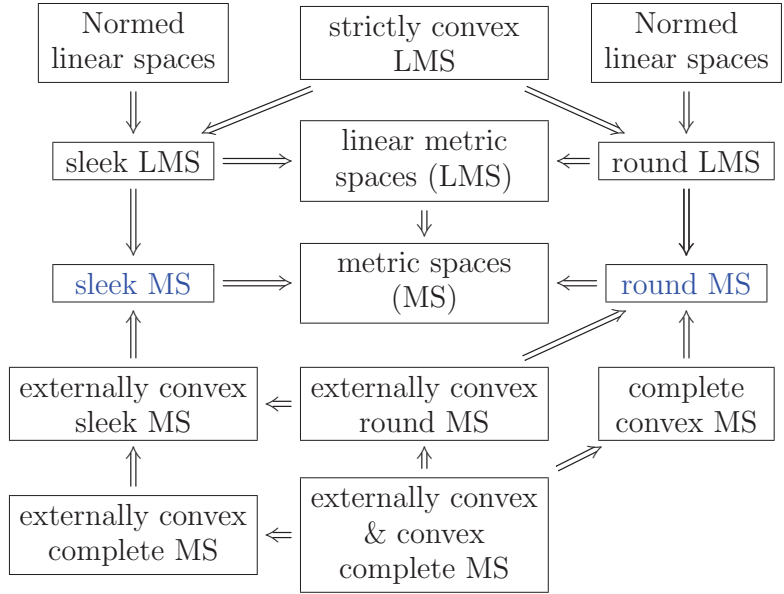}
\caption{The hierarchy of linear metric spaces and metric spaces, based on various conditions.}\label{F1}
\end{figure}

Some of the implications are known in the literature and the others are proved in Section \ref{sec2}.  In continuation to the recent work in \cite{JSTD2020,JSTD2022}, we obtain several interesting features of round and sleek metric spaces with an emphasis on identifying round and sleek subsets of metric spaces in the subspace topology. The paper is organized as follows. Preliminary definitions and results which are used in the subsequent sections are given in Section \ref{sec1}. The main results of this paper are discussed in Section  \ref{sec2}. Some examples are provided in Section \ref{sec3}.
\section{Preliminary definitions and results}\label{sec1}
In this section, we give some definitions and results to be used in the present paper.

A metric space $(X,d)$ is said to be
\begin{enumerate}[label=(\alph*)]
\item metrically convex  or convex if for every pair of distinct points $x$ and $y$ in $X$, there exists $z\in X\setminus\{x,y\}$ with $d(x,z)+d(z,y)=d(x,y)$. 
\item externally convex if  for every pair of distinct points $x$ and $y$ of $X$, there exists $z\in X\setminus\{x,y\}$ with $d(x,y)+d(y,z)=d(x,z)$.
\end{enumerate}
We call a subset $C$ of a metric space $(X,d)$ as
\begin{enumerate}[resume*]
\item  a convex set if $(C,d)$ is a convex metric space and for every pair of distinct points $x$ and $y$ in $C$, any $z\in X$ satisfying $d(x,z)+d(z,y)=d(x,y)$ belongs to $C$.
\item  an externally convex set if $(C,d)$ is an externally convex metric space and for every pair of distinct points $x$ and $y$ in $C$, any $z\in X$ satisfying $d(x,y)+d(y,z)=d(x,z)$ belongs to $C$.
 \end{enumerate}
 A linear metric space is a topological vector space with a compatible translation invariant metric (see \cite{Rudin}). 
 
 A linear metric space $(X,d)$ is called
   \begin{enumerate}[resume*]
\item strictly convex if $x,y\in B_d[0,r]$, $r>0$ implies $(x+y)/2\in B_d(0,r)$.
\item ball convex if $x,y\in B_d[0,r]$, $r>0$ implies $d(0,(x+y)/2)\leq r$.
\item strictly ball convex$^{\dagger}${\footnotetext[2]{A \emph{strictly ball convex} linear metric space was called \emph{strongly ball convex} in \cite{vasilev}.}} if $x,y\in B_d[0,r]$, $r>0$ implies $(1-t)x+ty\in B_d^{\circ}[0,r]$ for all $t\in (0,1)$.
   \end{enumerate}
    A subset $C$ of a linear metric space is called
    \begin{enumerate}[resume*]
 \item  convex  if $x,y\in C$ implies $(1-t)x+ty\in C$ for all $t\in [0,1]$.
    \end{enumerate}
The main results of this paper rest on the results  stated as under.
\begin{thmx}[\cite{SN79}]\label{thA0}
 If a linear metric space $(X,d)$ is strictly convex, then for every $x\in X\setminus\{0\}$, the map $f_x:[0,\infty]\rightarrow \mathbb{R}$  defined by
 \begin{eqnarray*}
 f_x(t)=d(0,tx),~0\leq t<\infty
 \end{eqnarray*}
 is strictly increasing.
\end{thmx}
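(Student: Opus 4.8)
The plan is to deduce monotonicity of $f_x$ from the convexity of the closed balls $B_d[0,r]$, and then to promote ``nondecreasing'' to ``strictly increasing'' by invoking the defining midpoint property of strict convexity. Throughout I fix $x\in X\setminus\{0\}$ and work with $f_x(t)=d(0,tx)$ on $[0,\infty)$. Two elementary facts come first: $f_x(0)=d(0,0)=0$, while $f_x(t)=d(0,tx)>0$ for every $t>0$, since $tx\neq 0$ whenever $t\neq 0$ and $x\neq 0$. In particular $f_x(0)<f_x(t)$ for all $t>0$, so the case $s=0$ is already settled and it remains to treat $0<s<t$.

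First I would show that each closed ball $B_d[0,r]$ is a convex subset of $X$. By the definition of strict convexity, if $u,v\in B_d[0,r]$ then $(u+v)/2\in B_d(0,r)\subseteq B_d[0,r]$ (the case $u=v$ being trivial), so $B_d[0,r]$ is closed under taking midpoints. Since $d(0,\cdot)$ is continuous, $B_d[0,r]=\{y:d(0,y)\le r\}$ is a topologically closed set, and a closed, midpoint-convex subset of a topological vector space is convex (iterate the midpoint property to reach all dyadic convex combinations, then pass to the limit using closedness). Hence $B_d[0,r]$ is convex.

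Next I would use convexity of the balls to prove $f_x$ is nondecreasing. For fixed $r$ consider the sublevel set $S_r=\{t\ge 0: f_x(t)\le r\}=\{t\ge 0: tx\in B_d[0,r]\}$. Because $t\mapsto tx$ is linear and $B_d[0,r]$ is convex, $S_r$ is a convex subset of $[0,\infty)$, i.e.\ an interval; moreover $0\in S_r$ since $0\cdot x=0\in B_d[0,r]$. Consequently, whenever $t\in S_r$ and $0\le s\le t$, we also have $s\in S_r$. If $f_x$ were not nondecreasing there would be $s<t$ with $f_x(s)>f_x(t)=:r$; then $t\in S_r$ forces $s\in S_r$, i.e.\ $f_x(s)\le r$, a contradiction. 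Thus $f_x$ is nondecreasing on $[0,\infty)$.

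Finally I would upgrade this to strict monotonicity. Suppose $0<s<t$ with $f_x(s)=f_x(t)=:r$. Then $sx,tx\in B_d[0,r]$ and $sx\neq tx$, so strict convexity gives $d\!\left(0,\tfrac{s+t}{2}x\right)<r$, that is $f_x\!\left(\tfrac{s+t}{2}\right)<r=f_x(s)$. But $s<\tfrac{s+t}{2}$ and $f_x$ is nondecreasing, so $f_x\!\left(\tfrac{s+t}{2}\right)\ge f_x(s)$, a contradiction. Hence $f_x(s)<f_x(t)$ for all $0\le s<t$, and $f_x$ is strictly increasing. The one step deserving care is the passage from the midpoint condition to genuine convexity of $B_d[0,r]$: since the metric on a linear metric space need not be homogeneous, no scaling argument is available, and one must lean on continuity and closedness to bridge dyadic and arbitrary convex combinations. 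The remainder is a short contradiction argument.
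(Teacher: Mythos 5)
Your proof is correct, but note that the paper itself offers no proof of Theorem \ref{thA0} to compare against: it is imported as a known result, cited to Sastry and Naidu \cite{SN79}, so your argument must stand on its own --- and it does. Your decomposition (strict convexity $\Rightarrow$ midpoint convexity of $B_d[0,r]$ $\Rightarrow$ full convexity via closedness and dyadic approximation $\Rightarrow$ sublevel sets $S_r$ are intervals containing $0$ $\Rightarrow$ $f_x$ nondecreasing, then a midpoint contradiction to upgrade to strictness) is the standard route for this result, and each step is sound in a general linear metric space; you are right that the absence of homogeneity of $d$ is exactly why one must pass through closedness rather than scaling. Two small points deserve explicit mention, though both are easily checked. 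First, the paper's definition of strict convexity, read literally, quantifies over all $x,y\in B_d[0,r]$ including $x=y$, which would be vacuously impossible; the intended reading requires $x\neq y$, and your parenthetical ``the case $u=v$ being trivial'' shows you adopted the correct interpretation, but in a formal write-up you should flag this. Second, when you apply strict convexity to $sx$ and $tx$ with $f_x(s)=f_x(t)=r$, the definition demands $r>0$; this holds because $s>0$ and $x\neq 0$ give $r=d(0,sx)>0$, an inference you use implicitly and should state. Neither point is a gap, merely a matter of making the argument airtight.
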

\begin{thmx}[\cite{wong}]\label{thA}
A metrically convex complete metric space is round.
\end{thmx}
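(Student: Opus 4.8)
The plan is to prove the nontrivial inclusion $B_d[x,r]\subseteq \overline{B_d(x,r)}$, since the reverse inclusion is automatic: the closed ball $B_d[x,r]$ is a closed set containing $B_d(x,r)$, whence $\overline{B_d(x,r)}\subseteq B_d[x,r]$ for every $x\in X$ and every $r>0$, and this uses neither completeness nor metric convexity. For the forward inclusion, fix $x\in X$ and $r>0$ and let $y\in B_d[x,r]$. If $d(x,y)<r$, then $y\in B_d(x,r)\subseteq \overline{B_d(x,r)}$ and there is nothing to prove, so the entire content of the statement is concentrated in the boundary case $d(x,y)=r$. Thus it suffices to show that every point $y$ with $d(x,y)=r$ is a limit of points of the open ball $B_d(x,r)$.

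First I would produce a metric segment joining $x$ to $y$, and this is where completeness and metric convexity enter together. By Menger's classical theorem, a complete metrically convex metric space has the property that any two of its points $p,q$ are joined by an isometric copy of the interval $[0,d(p,q)]$; that is, there is a map $\gamma\colon[0,d(p,q)]\to X$ with $\gamma(0)=p$, $\gamma(d(p,q))=q$, and $d(\gamma(s),\gamma(t))=|s-t|$ for all $s,t$. Applying this to $p=x$ and $q=y$, for which $d(x,y)=r$, yields an isometry $\gamma\colon[0,r]\to X$ with $\gamma(0)=x$ and $\gamma(r)=y$.

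The conclusion is then immediate. For each $t\in[0,r)$ we have $d(x,\gamma(t))=|t-0|=t<r$, so $\gamma(t)\in B_d(x,r)$, while $d(\gamma(t),y)=d(\gamma(t),\gamma(r))=r-t\to 0$ as $t\to r^-$. Hence $y=\lim_{t\to r^-}\gamma(t)\in \overline{B_d(x,r)}$, which establishes $B_d[x,r]\subseteq \overline{B_d(x,r)}$ and therefore $\overline{B_d(x,r)}=B_d[x,r]$ for all admissible $x$ and $r$; since $d$ induces the topology of $X$, the space is round.

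The crux of the argument, and the step I expect to be the main obstacle, is the existence of the segment $\gamma$. Metric convexity as defined only guarantees, for distinct $p,q$, some intermediate point $z\neq p,q$ with $d(p,z)+d(z,q)=d(p,q)$; it gives no control on how far $z$ lies from the endpoints. A naive iteration---repeatedly inserting an intermediate point between the current approximant and $y$---produces points of $B_d(x,r)$ whose distance to $y$ is strictly decreasing but need not tend to $0$, so it fails. The genuine work is to upgrade a single intermediate point into a midpoint, namely a point $m$ with $d(p,m)=d(m,q)=\frac12 d(p,q)$, and then, by iterating the midpoint construction over the dyadic rationals and invoking completeness to fill in the remaining parameters, to assemble the full isometry on $[0,d(p,q)]$. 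Completeness is exactly what turns the dense, dyadically indexed family of midpoints into a continuous metric segment, and it is indispensable here.
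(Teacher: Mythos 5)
Your argument is correct, but there is nothing in the paper to compare it against: Theorem~\ref{thA} is stated with the citation \cite{wong} and imported as a black box (it is then used, e.g., to deduce roundness of compact connected Riemannian manifolds and of the circle), so the paper contains no proof of it. What you have reconstructed is essentially the standard argument, and in substance the one behind Wong's result. Your decomposition is the right one: the inclusion $\overline{B_d(x,r)}\subseteq B_d[x,r]$ is automatic, and everything reduces to showing that a point $y$ with $d(x,y)=r$ is a limit of points of $B_d(x,r)$. The appeal to Menger's theorem is exactly where completeness and metric convexity must interact: as you correctly observe, plain metric convexity only produces \emph{some} intermediate point, with no control on its distance to the endpoints, so naive iteration produces a decreasing but possibly non-vanishing sequence of distances to $y$; it is the full segment $\gamma\colon[0,r]\to X$ with $d(\gamma(s),\gamma(t))=|s-t|$ that yields points $\gamma(t)\in B_d(x,r)$ converging to $y$. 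The only caveat is that the existence of midpoints in a complete convex metric space---which you describe as the ``genuine work''---is itself the nontrivial core of Menger's theorem and is not a routine iteration; but citing that theorem as a classical result, as you do, is a legitimate way to close the argument, and with it your proof is complete.
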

A connected Riemannian manifold admits a geodesic
curve joining any pair of distinct points on it. The smallest length of the arcs joining such a pair of points is always along the geodesic and is defined by the Riemannian metric tensor (see \cite[p.~21]{J2017}). The resulting metric turns the manifold into a convex metric space. This in view of Theorem \ref{thA} tells us that every compact connected Riemannian manifold is round.
\begin{thmx}[\cite{JSTD2022}]\label{thB} Let $(X,d)$ be a metric space having at least two points.
\begin{enumerate}[label=(\alph*)]
\item\label{Ba} The metric $d$ is not round for $X$ if and only if there exists an open set $U$ in $X$ and $x\in X\setminus U$ for which the map $d(x,\cdot):U\rightarrow \mathbb{R}$ has a minimum.
\item\label{Bb} The metric $d$ is not sleek for $X$ if and only if there exists an open set $U$ in $X$ and $x \in U$ for which the map $d(x,\cdot):U\rightarrow \mathbb{R}$ has a maximum.
\end{enumerate}
\end{thmx}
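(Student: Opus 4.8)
The plan is to exploit the two inclusions that hold in \emph{every} metric space, namely $\overline{B_d(x,r)}\subseteq B_d[x,r]$ (since $B_d[x,r]$ is closed and contains $B_d(x,r)$) and $B_d(x,r)\subseteq B_d^{\circ}[x,r]$ (since $B_d(x,r)$ is open and contained in $B_d[x,r]$). Thus $d$ fails to be round precisely when, for some $x$ and some $r>0$, the first inclusion is strict, and it fails to be sleek precisely when, for some $x$ and some $r>0$, the second inclusion is strict. In either case the witnessing point $y$ must lie exactly on the sphere: from $y\in B_d[x,r]$ together with $y\notin B_d(x,r)$ one gets $r\le d(x,y)\le r$, hence $d(x,y)=r$. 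The whole argument then amounts to reading ``$y\notin\overline{B_d(x,r)}$'' and ``$y\in B_d^{\circ}[x,r]$'' as statements about extrema of the distance function $d(x,\cdot)$ on a suitable open set.

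For part \ref{Ba}, suppose first that $d$ is not round, and pick $x,r,y$ as above with $d(x,y)=r$ and $y\notin\overline{B_d(x,r)}$. Then $U:=X\setminus\overline{B_d(x,r)}$ is an open neighbourhood of $y$ disjoint from $B_d(x,r)$, so $d(x,z)\ge r$ for all $z\in U$ while $d(x,y)=r$; hence $d(x,\cdot)$ attains its minimum on $U$ at $y$, and $x\notin U$ because $d(x,x)=0<r$. Conversely, given an open $U$ with $x\notin U$ on which $d(x,\cdot)$ has a minimum, say value $m=d(x,y)$ at $y\in U$, the hypothesis $x\notin U$ forces $y\neq x$ and so $m>0$; taking $r=m$, the openness of $U$ together with the bound $d(x,\cdot)\ge r$ on $U$ shows that $U$ witnesses $y\notin\overline{B_d(x,r)}$, while $d(x,y)=r$ gives $y\in B_d[x,r]$, so the inclusion is strict and $d$ is not round.

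Part \ref{Bb} is dual, but with one genuine subtlety in the choice of open set. If $d$ is not sleek, pick $x,r,y$ with $d(x,y)=r$ and $y\in B_d^{\circ}[x,r]$. A small neighbourhood of $y$ need not contain $x$, which the statement demands; the right choice is instead $U:=B_d^{\circ}[x,r]$ itself, which is open, contains $x$ (as $x\in B_d(x,r)\subseteq B_d^{\circ}[x,r]$), satisfies $d(x,z)\le r$ for all $z\in U\subseteq B_d[x,r]$, and contains $y$ with $d(x,y)=r$; thus $d(x,\cdot)$ attains its maximum $r$ on $U$ at $y$. Conversely, given an open $U\ni x$ on which $d(x,\cdot)$ attains a maximum $M=d(x,y)$, one sets $r=M$ and argues as before that $y\in B_d^{\circ}[x,r]\setminus B_d(x,r)$, whence $d$ is not sleek.

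I expect the only real points to watch are the asymmetric placement of $x$ (outside $U$ in \ref{Ba}, inside $U$ in \ref{Bb}), which dictates the different choices $U=X\setminus\overline{B_d(x,r)}$ versus $U=B_d^{\circ}[x,r]$, and the need to rule out a degenerate extremum. In \ref{Ba} positivity of the minimum is automatic from $x\notin U$, whereas in \ref{Bb} the maximum $M$ could a priori be $0$; here the standing assumption that $X$ has no isolated point is exactly what is needed, since $M=0$ would force $U=\{x\}$ and hence make $x$ isolated.
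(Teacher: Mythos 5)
Your argument is essentially the natural one, and it is worth noting at the outset that this paper contains no proof of Theorem~\ref{thB} at all --- the result is imported from \cite{JSTD2022} --- so there is no in-paper proof to compare against; your proposal has to stand on its own. Most of it does: in part \ref{Ba} both directions are correct (the witness $y$ of strict inclusion lies on the sphere $d(x,y)=r$; the set $U=X\setminus\overline{B_d(x,r)}$ serves as the open set in the forward direction, and conversely $U$ itself shows $U\cap B_d(x,r)=\emptyset$, hence $y\notin\overline{B_d(x,r)}$), and the forward direction of part \ref{Bb}, with the choice $U=B_d^{\circ}[x,r]$, is likewise correct, as is the $M>0$ case of its converse.

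The one genuine loose end is your treatment of the degenerate case $M=0$ in the converse of part \ref{Bb}. You dispose of it by appealing to ``the standing assumption that $X$ has no isolated point,'' but that assumption is not among the hypotheses of the theorem as stated (it requires only that $X$ have at least two points), and it cannot be smuggled in: immediately after the theorem the paper observes that a non-singleton metrizable space \emph{with} an isolated point is neither round nor sleek, a deduction that only makes sense if the theorem applies to such spaces. So your proof, as written, establishes a slightly weaker statement. Fortunately the gap closes in one line without any extra hypothesis: if $M=0$ then $U=\{x\}$, so $x$ is an isolated point; choose any $q\neq x$ (possible since $X$ has at least two points) and set $\rho=d(q,x)>0$. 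Then $\{x\}$ is open and contained in $B_d[q,\rho]$, so $x\in B_d^{\circ}[q,\rho]$, while $x\notin B_d(q,\rho)$ since $d(q,x)=\rho$; hence $B_d^{\circ}[q,\rho]\neq B_d(q,\rho)$ and $d$ is not sleek. With that sentence added, your proof is complete and correct.
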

Theorem \ref{thB} allows an easy  identification of round and sleek metric spaces from the others via attainment of minimum and maximum of the underlying function $d(x,\cdot)$ on an open set. It is easy to observe from this theorem that a non-singleton metrizable space having an isolated point is neither round nor sleek. In this view, the notions of roundness and sleekness make sense only for the non-singleton metrizable spaces which have no isolated point.
\section{Main results}\label{sec2}
It is known (see \cite{TH}) that an open or dense subset of a round (resp. sleek) metric space is round (resp. sleek). The following results  identify a class of round subsets of normed linear spaces and linear metric spaces.
\begin{theorem}\label{th1}
A convex subset of a normed linear space is round in the subspace topology. In particular, a normed linear space is round.
\end{theorem}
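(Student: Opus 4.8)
The plan is to verify the roundness condition directly from its definition, namely that for every $x\in C$ and every $r>0$ the closure, taken inside $C$, of the open ball $\{y\in C:d(x,y)<r\}$ coincides with the closed ball $\{y\in C:d(x,y)\le r\}$, where $C$ denotes the convex subset and $d(u,v)=\|u-v\|$ is the restriction of the norm metric. One inclusion is free: the closed ball is the preimage of $(-\infty,r]$ under the continuous map $d(x,\cdot)$ restricted to $C$, so it is closed in $C$ and contains the open ball, whence it contains the closure of the open ball. The entire content therefore lies in the reverse inclusion, and in fact only in the boundary case, since any $y$ with $d(x,y)<r$ already sits in the open ball. Thus it suffices to treat points $y\in C$ with $\|x-y\|=r$.

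For such a $y$ I would exploit convexity of $C$ to join $x$ to $y$ by a line segment lying entirely in $C$. Setting $z_t=(1-t)x+ty$ for $t\in[0,1]$, convexity gives $z_t\in C$, while homogeneity of the norm yields $\|x-z_t\|=t\|x-y\|=tr$ and $\|z_t-y\|=(1-t)\|x-y\|=(1-t)r$. Hence for every $t\in[0,1)$ the point $z_t$ lies in the open ball $\{y\in C:d(x,y)<r\}$, and $z_t\to y$ as $t\to 1^{-}$ because $\|z_t-y\|=(1-t)r\to 0$. Thus $y$ is a limit, within $C$, of points of the open ball, so $y$ belongs to its closure in $C$. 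This establishes the reverse inclusion, hence the desired equality, proving that $(C,d)$ is round.

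The ``in particular'' statement is the special case $C=X$, as a normed linear space is a convex subset of itself. The only point demanding care is that all balls and closures must be computed in the subspace $C$ rather than in $X$; the segment argument is precisely what guarantees that the approximating points $z_t$ remain in $C$, and this is exactly where convexity is used. I expect no serious obstacle beyond this bookkeeping. As an alternative route, one could argue by contraposition through part \ref{Ba} of Theorem \ref{thB}: if $d$ were not round on $C$, there would exist an open set $U\subseteq C$ and a point $x\in C\setminus U$ at which $d(x,\cdot)$ attains a minimum $m>0$ on $U$, say at $y\in U$; sliding $y$ slightly toward $x$ along the segment $[x,y]\subseteq C$ produces a point of $U$ strictly closer to $x$, contradicting minimality. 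Either way, convexity supplying the connecting segment is the crux of the proof.
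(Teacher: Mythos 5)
Your proof is correct, and your primary argument takes a genuinely different route from the paper's. The paper argues by contradiction through Theorem \ref{thB}\ref{Ba}: if $d$ were not round for $C$, there would be an open set $O$ in $X$, a point $x\in C\setminus O$, and a minimizer $y\in C\cap O$ of $d(x,\cdot)$ on $C\cap O$; the segment point $z=\frac{x}{n}+\bigl(1-\frac{1}{n}\bigr)y$ then lies in $C$ by convexity, lies in $O$ for large $n$ by openness, and satisfies $d(x,z)=\bigl(1-\frac{1}{n}\bigr)\|x-y\|<\|x-y\|$, contradicting minimality --- which is exactly the alternative you sketch in your final paragraph. Your main proof instead verifies the defining identity of roundness directly in the subspace: the closure in $C$ of $B_d(x,r)\cap C$ equals $B_d[x,r]\cap C$, where the only nontrivial points are those $y\in C$ with $\|x-y\|=r$, and the segment points $z_t=(1-t)x+ty\in C$ satisfy $\|x-z_t\|=tr<r$ and $z_t\to y$, exhibiting $y$ as a limit of open-ball points of $C$. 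The geometric crux --- convexity supplies points of $C$ on the segment from $x$ to $y$, arbitrarily close to $y$ yet strictly closer to $x$ --- is identical in both arguments; what differs is the outer framing. Your direct verification is more elementary and self-contained: it does not invoke the characterization of Theorem \ref{thB}, and it even covers degenerate cases (e.g.\ singleton $C$) that Theorem \ref{thB} formally excludes. The paper's formulation via minima of $d(x,\cdot)$ on open sets, on the other hand, is the uniform template reused throughout the paper (Theorems \ref{th2}, \ref{th4}, \ref{STH2}, etc.), including in linear metric spaces where homogeneity of the norm is unavailable and must be replaced by monotonicity of $t\mapsto d(0,t(x-y))$; so the paper's route scales better to those generalizations, at the cost of depending on the cited lemma.
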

\begin{proof}
    Let $C$ be a convex subset of a normed linear space $(X, \|\cdot\|)$. Let $d$ be the metric induced by the norm $\|\cdot\|$ on $X$.  If possible, suppose that $C$ is not round. Then by Theorem \ref{thB}\ref{Ba}, there exist an open subset $O$ of $X$ and a point $x\in C\setminus O$, such that the map $d(x,\cdot):C\cap O\rightarrow \mathbb{R}$ has a minimum value. Consequently, there exists $y\in C\cap O$ for which $0<\|x-y\|\leq \|x-c\|$ for all $c\in C\cap O$. Observe that for all $t\in (0,1)$, we have
    \begin{eqnarray*}
d(x,(1-t)x+ty)=t\|x-y\|<\|x-y\|,
    \end{eqnarray*}
and so, $(1-t)x+ty\not\in C\cap O$. Since $y\in O$ and $O$ is open in $X$, there exists an $\epsilon>0$ such that $B_d(y,\epsilon)\subseteq O$. By Archimedean property of real line, choose a positive integer $n$ for which $\frac{1}{n}\|x-y\|<\epsilon$. Let $z=\frac{x}{n}+\bigl(1-\frac{1}{n}\bigr)y$. Since $C$ is convex and $x,y\in C$, the point $z$ belongs to $C$. We also have
$d(y,z)=\frac{1}{n}\|x-y\|<\epsilon$, and so, $z\in B_d(y,\epsilon)$. Thus, $z\in C\cap B_d(y,\epsilon)\subseteq C\cap O$, where
 \begin{eqnarray*}
\|x-y\|\leq d(x,z)=(1-1/n)\|x-y\|<\|x-y\|,
    \end{eqnarray*}
which is absurd and the theorem holds.
\end{proof}
Whereas a convex subset of a linear metric space need not be round in the subspace topology \cite{JSTD2020}, the following result presents
an analogue  of Theorem \ref{th1} for the strictly convex linear metric spaces.
\begin{theorem}\label{th2}
A convex subset of a strictly convex linear metric space is round in the subspace topology. In particular, a strictly convex linear metric space is round.
\end{theorem}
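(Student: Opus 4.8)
The plan is to follow the strategy of the proof of Theorem~\ref{th1}, replacing the use of the homogeneity of the norm by the strict monotonicity furnished by Theorem~\ref{thA0}. Let $C$ be a convex subset of a strictly convex linear metric space $(X,d)$, and suppose for contradiction that $C$ is not round in the subspace topology. By Theorem~\ref{thB}\ref{Ba} applied to $(C,d)$, there would then exist an open set $O$ of $X$ and a point $x\in C\setminus O$ such that $d(x,\cdot)\colon C\cap O\to\mathbb{R}$ attains a minimum, say at $y\in C\cap O$; thus $0<d(x,y)\le d(x,c)$ for every $c\in C\cap O$, and in particular $y-x\neq 0$ since $x\notin O\ni y$.

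Next I would examine the segment $(1-t)x+ty$, $t\in(0,1)$, which lies in $C$ by convexity. Here the norm identity $\|x-((1-t)x+ty)\|=t\|x-y\|$ is no longer available, so the main point is to recover a strict-decrease property by other means. Using translation invariance, $d\bigl(x,(1-t)x+ty\bigr)=d\bigl(0,t(y-x)\bigr)=f_{y-x}(t)$, where $f_{y-x}$ is the map of Theorem~\ref{thA0} associated to the nonzero vector $y-x$. Since $(X,d)$ is strictly convex, Theorem~\ref{thA0} tells us $f_{y-x}$ is strictly increasing, whence
\begin{equation*}
d\bigl(x,(1-t)x+ty\bigr)=f_{y-x}(t)<f_{y-x}(1)=d(x,y)\quad\text{for all }t\in(0,1).
\end{equation*}
Consequently no interior point of this segment can lie in $C\cap O$, for that would contradict the minimality of $d(x,y)$.

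It then remains to produce such a segment point that does lie in $C\cap O$. Since $y\in O$ and $O$ is open, choose $\epsilon>0$ with $B_d(y,\epsilon)\subseteq O$. By translation invariance, $d\bigl(y,(1-t)x+ty\bigr)=d\bigl(0,(1-t)(x-y)\bigr)=f_{x-y}(1-t)$, and continuity of scalar multiplication together with continuity of $d$ gives $f_{x-y}(s)\to f_{x-y}(0)=0$ as $s\to 0^+$. Hence for $t$ sufficiently close to $1$ the point $z=(1-t)x+ty$ satisfies $d(y,z)<\epsilon$, so $z\in C\cap B_d(y,\epsilon)\subseteq C\cap O$. But then $d(x,z)<d(x,y)$ contradicts the minimality at $y$, and so $C$ must be round. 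Taking $C=X$ yields the \emph{in particular} claim.

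The main obstacle, as indicated above, is exactly the failure of homogeneity of a translation-invariant metric: in the normed case both the strict decrease of $d(x,\cdot)$ along the segment toward $x$ and the ability to approach $y$ along that segment follow at once from $d(x,(1-t)x+ty)=t\|x-y\|$. Here these two facts must be split and obtained separately---the strict decrease from the strict monotonicity of $f_{y-x}$ guaranteed by strict convexity (Theorem~\ref{thA0}), and the approximation from the continuity of $f_{x-y}$ at $0$ coming from the topological vector space structure. I would take care to verify that $y-x\neq 0$ so that Theorem~\ref{thA0} genuinely applies, which is the one place where the geometric hypothesis of strict convexity is essential.
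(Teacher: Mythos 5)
Your proof is correct and follows essentially the same route as the paper's: contradiction via Theorem~\ref{thB}\ref{Ba}, convexity of $C$ to place segment points in $C$, Theorem~\ref{thA0} for the strict decrease $d(x,z)<d(x,y)$, and continuity of $t\mapsto d(0,t(x-y))$ to land a segment point in $C\cap B_d(y,\epsilon)$. The only cosmetic difference is that the paper invokes the intermediate value theorem to pick its point $z_\epsilon$ with $d(y,z_\epsilon)=\epsilon/2$, whereas you use continuity at $0$ directly (which is in fact how the paper argues in its generalization, Theorem~\ref{th4}).
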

\begin{proof}
Let $(X,d)$ be a strictly convex linear metric space, and let $C$ be a convex subset of $X$. Assume on the contrary that $C$ is not round. Then by Theorem \ref{thB}\ref{Ba}, there exists an open subset $O$ of $X$ and  $x\in C\setminus O$ such that for some $y\in C\cap O$, $0<d(x,y)\leq d(x,c)$ for all $c\in C\cap O$. Choose $0<\epsilon<d(x,y)$ for which $B_d(y,\epsilon)\subseteq O$. Let $f:[0,1]\rightarrow \mathbb{R}$ be such that
\begin{eqnarray*}
f(t)=d(0,t(x-y)),~t\in [0,1].
\end{eqnarray*}
Clearly, $f$ is continuous with $f(0)=0<\epsilon<f(1)=d(0,x-y)=d(x,y)$. By the intermediate value theorem, there exists $t_\epsilon \in (0,1)$ for which $f(t_\epsilon)=\epsilon/2$, and so, we have $d(0,t_\epsilon(x-y))=f(t_\epsilon)<\epsilon$. Consequently, if we let
\begin{eqnarray*}
z_\epsilon=t_\epsilon x+(1-t_\epsilon)y,
\end{eqnarray*}
then $z_\epsilon\neq x$ and $z_\epsilon \neq y$.
Since $x,y\in C$ and $C$ is convex, we must have $z_\epsilon\in C$. We then have
\begin{eqnarray*}
0<d(y,z_\epsilon)=d(0,t_\epsilon(x-y))<\epsilon,
\end{eqnarray*}
which shows that $z_\epsilon\in C\cap B_d(y,\epsilon)\subseteq C\cap O$. Since $(X,d)$  is strictly convex, by Theorem \ref{thA0} we have
\begin{eqnarray*}
d(x,y)\leq d(x,z_\epsilon)=d(0,(1-t_\epsilon)(x-y))<d(x,y),
\end{eqnarray*}
which is a contradiction, and so, the theorem holds.
\end{proof}
Strict convexity is necessary in the statement of Theorem \ref{th2}. In fact if we take $d(x,y)=\min\{1,|x-y|\}$ for all $x,y\in \mathbb{R}$, then $(\mathbb{R},d)$ is a complete linear metric space having strict ball convexity but $(\mathbb{R},d)$ is not strictly convex, since $d(0,2)=1=d(0,4)$ but $d(0,(2+4)/2)=1$. The metric $d$ is not round for $\mathbb{R}$, since $B_d(0,1)=(-1,1)$, $\bar{B}_d(0,1)=[-1,1]\neq \mathbb{R}=B_d[0,1]$.
\begin{corollary}
If $C$ is a convex subset of a strictly convex linear metric space $(X,d)$ and $C\subseteq Y\subseteq \bar{C}$, then $d$ is a round metric for $Y$.
\end{corollary}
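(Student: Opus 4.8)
The plan is to reduce the statement to the roundness already obtained in Theorem \ref{th2}, combined with the fact recalled at the start of this section (see \cite{TH}) that a dense subset of a round metric space is round. The key idea is not to work with $C$ directly but to pass to its closure. First I would observe that the closure $\bar C$, taken in $X$, is again convex: in any topological vector space the closure of a convex set is convex. Hence $\bar C$ is a convex subset of the strictly convex linear metric space $(X,d)$, and Theorem \ref{th2} applies verbatim to it, yielding that $(\bar C,d)$ is round in the subspace topology.

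Next I would verify that $Y$ is dense in the metric space $(\bar C,d)$. Since $C\subseteq Y$, the closure of $Y$ in $X$ contains the closure of $C$, namely $\bar C$; and since $Y\subseteq \bar C$ with $\bar C$ closed, the closure of $Y$ is contained in $\bar C$. Therefore $\overline{Y}=\bar C$, so $Y$ is a dense subset of $(\bar C,d)$.

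Finally, invoking the result that a dense subset of a round metric space is round, I conclude that $(Y,d)$ is round, i.e.\ that $d$ is a round metric for $Y$. I expect the only step needing genuine care to be the first one: one must ensure that forming the closure preserves convexity so that Theorem \ref{th2} is legitimately applicable to $\bar C$ rather than only to $C$. The remaining steps amount to routine bookkeeping about closures in the subspace topology and a direct appeal to the stated density principle.
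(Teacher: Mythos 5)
Your proposal is correct and follows essentially the same route as the paper: pass to $\bar{C}$, note it is convex, apply Theorem \ref{th2} to get roundness of $\bar{C}$, and then transfer roundness to $Y$ by density via the result cited from \cite{TH}. The only cosmetic difference is that the paper proves the convexity of $\bar{C}$ explicitly (using continuity of the map $(x,y)\mapsto(1-t)x+ty$) rather than quoting it as a standard fact about topological vector spaces.
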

\begin{proof} For any $t\in (0,1)$, the map
$f_t:X\times X\rightarrow X$ where $f_t(x,y)=(1-t)x+ty$ is continuous. Since $C$ is convex, we must have $f_t(C\times C)=(1-t)C+tC\subseteq C$ for all $t\in (0,1)$. This, along with the continuity of $f_t$ gives
\begin{eqnarray*}
(1-t)\bar{C}+t\bar{C}=f_t(\bar{C}\times \bar{C})=f_t(\overline{C\times C})\subseteq \overline{f_t(C\times C)}\subseteq  \bar{C},
\end{eqnarray*}
for all $t\in (0,1)$, and so, $\bar{C}$ is convex. By Theorem \ref{th2}, $d$ is a round metric for $\bar{C}$. Since $Y$ is a dense subset of $\bar{C}$ in the subspace topology, the metric $d$ is round for $Y$.
\end{proof}
\begin{corollary}
  Let $C$ be a convex subset of a linear metric space $(X,d)$ having strict ball convexity. If $d$ is a round metric for $X$, then $d$ is a round metric for $C$.
\end{corollary}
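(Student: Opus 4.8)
The plan is to observe that the two hypotheses imposed on $(X,d)$ together amount to strict convexity, after which the conclusion is immediate from the subspace result already in hand. Concretely, I would invoke the characterization of Vasil'ev recalled in the introduction: a linear metric space is strictly convex if and only if it has strict ball convexity and is round. Since $(X,d)$ is assumed to have strict ball convexity and $d$ is assumed to be round for $X$, this equivalence forces $(X,d)$ to be a strictly convex linear metric space.

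With strict convexity of $(X,d)$ established, Theorem \ref{th2} applies directly, since it asserts that a convex subset of a strictly convex linear metric space is round in the subspace topology. As $C$ is by hypothesis a convex subset of the (now strictly convex) space $(X,d)$, it follows that $d$ is a round metric for $C$, which is exactly the desired statement. Thus the corollary reduces to the chain \emph{strict ball convexity and roundness of $X$} $\Rightarrow$ \emph{$X$ strictly convex} $\Rightarrow$ \emph{$C$ round}.

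There is no genuine analytic obstacle in this argument; the only point requiring attention is the recognition that the combined hypotheses collapse to strict convexity via the cited equivalence of Vasil'ev, rather than having to be exploited separately. If one wished to avoid that equivalence and argue from scratch, one could instead try to adapt the contradiction scheme used in the proof of Theorem \ref{th2}: assuming $C$ is not round, Theorem \ref{thB}\ref{Ba} would furnish an open set $O$ and a point $x\in C\setminus O$ at which $d(x,\cdot)$ attains a minimum on $C\cap O$, say at $y$, and one would then have to manufacture a strictly nearer point on the segment joining $y$ to $x$ using strict ball convexity together with the roundness of $X$. That route is more delicate, so the economical path through strict convexity and Theorem \ref{th2} is the one I would carry out.
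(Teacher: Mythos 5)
Your proposal is correct and follows exactly the paper's own argument: both invoke Vasil'ev's result that a round linear metric space with strict ball convexity is strictly convex, and then conclude by applying Theorem \ref{th2} to the convex subset $C$. No gaps; the reduction through strict convexity is precisely the intended proof.
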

\begin{proof} Let $d$ be a round metric for $X$. A round linear metric space having strict ball convexity is strictly convex (See \cite{vasilev}). In view of this and the hypothesis, the linear metric space $(X,d)$ is strictly convex. Consequently,  using Theorem \ref{th2} we conclude that $d$ is a round metric for $C$.
\end{proof}
There exist linear metric spaces $(X,d)$ which do not have strict convexity, but the map $t\mapsto d(0,tx)$, $x\in X\setminus\{0\}$, $t\geq 0$ is still a strictly increasing function of $t$. This can be seen from the following explicit example of \cite{SN81}. Consider the linear metric space $(\mathbb{R}^2,d)$ where
\begin{eqnarray*}
d((x_1,y_1),(x_2,y_2))=\sqrt{|x_1-x_2|+|y_1-y_2|};~(x_1,y_1),(x_2,y_2)\in \mathbb{R}^2.
\end{eqnarray*}
For any point $(x,y)\in\mathbb{R}^2$ with $xy\neq 0$, we have
\begin{eqnarray*}
f_{(x,y)}(t)=d((0,0),t(x,y))=\sqrt{t}\sqrt{|x|+|y|},~t\geq 0,
\end{eqnarray*}
which shows that $f_{(x,y)}$ is a strictly increasing continuous function of $t$. However, the linear metric space $(\mathbb{R}^2,d)$ is not strictly convex, since
\begin{eqnarray*}
d((0,0),(1,0))=d((0,0),(0,1))=1=d((0,0),((1,0)+(0,1))/2).
\end{eqnarray*}
The fact that the space $(\mathbb{R}^2,d)$ is round follows from the following generalization of Theorem \ref{th2}.
\begin{theorem}\label{th4}
Let $C$ be a convex subset of a linear metric space $(X,d)$ having the property that for each $x\in C$ with $x\neq 0$, the map $f_x:[0,1]\rightarrow \mathbb{R}$ where
\begin{eqnarray*}
f_x(t)=d(0,tx)~\text{for all}~ t\in [0,1]
\end{eqnarray*}
is strictly increasing. Then $C$ is a round subset of $X$ in the subspace topology.
\end{theorem}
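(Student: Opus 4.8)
The plan is to argue by contradiction, following almost verbatim the proof of Theorem \ref{th2}, with the standing hypothesis on the maps $f_x$ playing exactly the role that Theorem \ref{thA0} played there. Suppose $C$ is not round in the subspace topology. By Theorem \ref{thB}\ref{Ba} there exist an open set $O$ in $X$ and a point $x\in C\setminus O$ such that $d(x,\cdot)$ attains a minimum on $C\cap O$, say at $y\in C\cap O$, so that $0<d(x,y)\leq d(x,c)$ for every $c\in C\cap O$. Fixing $\epsilon$ with $0<\epsilon<d(x,y)$ and $B_d(y,\epsilon)\subseteq O$ sets the stage; the goal is then to exhibit a point of $C\cap O$ strictly closer to $x$ than $y$ is, contradicting the minimality.

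To build such a point I would move from $y$ toward $x$ along the segment lying in $C$. Consider $g(t)=d(0,t(x-y))$ for $t\in[0,1]$. Since scalar multiplication and $d(0,\cdot)$ are continuous, $g$ is continuous, with $g(0)=0<\epsilon<d(x,y)=g(1)$ (using translation invariance $d(x,y)=d(0,x-y)$). By the intermediate value theorem there is $t_\epsilon\in(0,1)$ with $g(t_\epsilon)<\epsilon$. Put $z_\epsilon=t_\epsilon x+(1-t_\epsilon)y$; convexity of $C$ gives $z_\epsilon\in C$, and translation invariance gives $d(y,z_\epsilon)=d(0,t_\epsilon(x-y))=g(t_\epsilon)<\epsilon$, so $z_\epsilon\in C\cap B_d(y,\epsilon)\subseteq C\cap O$.

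It remains to compare $d(x,z_\epsilon)$ with $d(x,y)$. Translation invariance and symmetry of the metric give $d(x,z_\epsilon)=d(0,(1-t_\epsilon)(x-y))=f_{x-y}(1-t_\epsilon)$, while $d(x,y)=f_{x-y}(1)$. This is precisely the step where the hypothesis enters: applying the strict monotonicity of $f_{x-y}$ to the argument $1-t_\epsilon<1$ yields $d(x,z_\epsilon)<d(x,y)$, contradicting the minimality of $d(x,y)$ on $C\cap O$. Hence $C$ is round.

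I expect the only genuinely delicate point to be this last inequality, namely securing the strict monotonicity along the particular direction $x-y$ that joins the exterior point to the minimizer --- the feature that in Theorem \ref{th2} was furnished for every nonzero direction by Theorem \ref{thA0}, and that here is supplied directly by the hypothesis on the maps $f_x$ (as is automatic in the motivating example on $\mathbb{R}^2$, where every nonzero direction enjoys this property). Everything else --- the intermediate value step and the membership $z_\epsilon\in C\cap O$ --- is routine once the convexity of $C$ and the translation invariance of $d$ have been invoked.
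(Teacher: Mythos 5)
Your proof is correct and follows essentially the same route as the paper's: contradiction via Theorem \ref{thB}\ref{Ba}, a point on the segment from $y$ toward $x$ lying in $C\cap B_d(y,\epsilon)\subseteq C\cap O$, and strict monotonicity of $f_{x-y}$ to force $d(x,z_\epsilon)<d(x,y)$; the paper merely realizes this point as $z_n=(1/n)x+(1-1/n)y$ for large $n$ using continuity of $f_{x-y}$ at $0$, where you invoke the intermediate value theorem. One remark: both you and the paper apply the monotonicity hypothesis to the direction $x-y$, which need not belong to $C$ even though $x,y\in C$ --- a shared imprecision, since the theorem states the hypothesis only for nonzero $x\in C$ (it is harmless in the paper's applications, where the property holds for every nonzero vector of $X$).
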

\begin{proof}
Suppose the contrary that there exists an open subset $O$ of $X$ and $x\in C\setminus O$, $y\in C\cap O$ such that $d(x,y)\leq d(x,u)$ for all $u\in C\cap O$. Then $x\neq y$, and also, there exists $\epsilon>0$ for which $B_d(y,\epsilon)\subseteq O$. For each natural number $n$, if we let $z_n=(1/n)x+(1-1/n)y$, then $z_n\in C$ since $C$ is convex.  By the continuity of $f_{x-y}$ at $0$, there exists a natural number $N$ for which
$f_{x-y}(1/n)<\epsilon$ for all $n\geq N$, where we note that $d(z_n,y)=f_{x-y}(1/n)$. Consequently, $z_n\in C\cap B_d(y,\epsilon)\subseteq C\cap O$, and so, $d(x,y)\leq d(x,z_n)$ for all $n\geq N$. This along with the fact that $f_{x-y}$ is strictly increasing, we get
\begin{eqnarray*}
d(x,y)\leq d(x,z_N)=f_{x-y}(1/N)<f_{x-y}(1)=d(x,y),
\end{eqnarray*}
which is a contradiction.
\end{proof}
\begin{theorem}
An externally convex subset of a normed linear space is sleek in the subspace topology. In particular, a normed linear space is sleek.
\end{theorem}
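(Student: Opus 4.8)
The plan is to mirror the argument for Theorem \ref{th1}, replacing the inward retraction along a segment by an outward extension along a ray, and to use the sleekness criterion of Theorem \ref{thB}\ref{Bb} in place of the roundness criterion. Let $C$ be an externally convex subset of a normed linear space $(X,\|\cdot\|)$ with induced metric $d$, and suppose toward a contradiction that $C$ is not sleek in the subspace topology. By Theorem \ref{thB}\ref{Bb} there exist an open set $O$ in $X$ and a point $x\in C\cap O$ for which $d(x,\cdot):C\cap O\rightarrow\mathbb{R}$ attains a maximum, say at $y\in C\cap O$, so that $d(x,y)\ge d(x,c)$ for all $c\in C\cap O$.

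First I would dispose of the degenerate possibility $y=x$. If the maximum value were $0$, then $C\cap O=\{x\}$; applying the defining membership property of an externally convex subset to a pair $(w,x)$ with $w\in C\setminus\{x\}$ produces points of $C$ lying arbitrarily close to, but distinct from, $x$, contradicting $C\cap O=\{x\}$. Hence we may assume $x\neq y$, i.e. $d(x,y)>0$. Since $y\in O$ and $O$ is open, fix $\epsilon>0$ with $B_d(y,\epsilon)\subseteq O$.

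The heart of the argument is to extend the configuration past the maximizer $y$. For $s\ge 1$ set $z_s=x+s(y-x)$, the point of the ray from $x$ through $y$ at parameter $s$. A direct computation in the norm gives $d(x,z_s)=s\|x-y\|$ and $d(y,z_s)=(s-1)\|x-y\|$, whence
\begin{eqnarray*}
d(x,y)+d(y,z_s)=\|x-y\|+(s-1)\|x-y\|=s\|x-y\|=d(x,z_s).
\end{eqnarray*}
Thus each $z_s$ with $s>1$ is a point $z\in X\setminus\{x,y\}$ satisfying $d(x,y)+d(y,z)=d(x,z)$; since $C$ is an externally convex subset and $x,y$ are distinct points of $C$, the defining property forces $z_s\in C$ for every $s\ge 1$.

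Finally I would choose $s$ close enough to $1$ to reach the contradiction. As $d(y,z_s)=(s-1)\|x-y\|\rightarrow 0$ when $s\rightarrow 1^{+}$, there is $s_0>1$ with $d(y,z_{s_0})<\epsilon$, so $z_{s_0}\in C\cap B_d(y,\epsilon)\subseteq C\cap O$. But $d(x,z_{s_0})=s_0\|x-y\|>\|x-y\|=d(x,y)$, contradicting the maximality of $d(x,y)$ on $C\cap O$. Hence $C$ is sleek, and the special case $C=X$ gives that every normed linear space is sleek. The one point needing care is the membership step: the conclusion $z_s\in C$ rests entirely on the \emph{subset} form of external convexity, namely that every $z$ satisfying the metric identity lies in $C$, and not merely on $(C,d)$ being an externally convex metric space. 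I expect this to be the main place where the hypothesis is genuinely used.
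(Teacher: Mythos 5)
Your proof is correct and follows essentially the same route as the paper's: both extend the configuration past the maximizer $y$ along the ray from $x$ through $y$ (your $z_s=x+s(y-x)$, $s>1$, is the paper's $z_n=(1+1/n)y-(1/n)x$ with $s=1+1/n$), invoke the membership form of external convexity to place these points in $C$, and take the parameter close to $1$ to land in $C\cap O$ and contradict maximality. The only difference is that you explicitly rule out the degenerate case $x=y$, which the paper simply asserts without argument; that is a small point in your favor, not a different method.
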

\begin{proof}
Let $C$ be an externally convex subset of a normed linear space $(X,\|\cdot\|)$. Let $d$ is the metric on $X$ induced by the norm. If possible, suppose that $C$ is not sleek. So, by Theorem \ref{thB}\ref{Bb}, there exists an open subset $O$ of $X$ and $x,y\in C\cap O$, for which $d(x,u)\leq d(x,y)$ for all $u\in C\cap O$. Then $C\cap O\subseteq B_d[x,d(x,y)]$ and $x\neq y$. For  each natural number $n>1$, let $z_n=(1+1/n)y-(1/n)x$. Then we have
 \begin{eqnarray*}
\|x-y\|+\|y-z_n\|=\Bigl(1+\frac{1}{n}\Bigr)\|x-y\|=\|x-z_n\|, ~n\geq 2,
 \end{eqnarray*}
which by  the external convexity of $C$ shows that $z_n\in C$ for all $n\geq 2$. Since $\|x-z_n\|=(1+1/n)\|x-y\|>\|x-y\|$ for all $n$, it follows that $z_n\not\in C\cap O$. Choose $\epsilon>0$ for which $B_d(y,\epsilon)\subseteq O$ so that $C\cap B_d(y,\epsilon)\subseteq C\cap O$. Since $\|z_n-y\|=(1/n)\|x-y\|\rightarrow 0$ as $n\rightarrow \infty$, there exists
a natural number $N$ such that  $\|z_n-y\|<\epsilon$ for all $n\geq N$. In particular,
$z_N\in C\cap B_d(y,\epsilon)\subseteq C\cap O$, which is a contradiction.
\end{proof}
\begin{theorem}\label{STH2}  Let $(X,d)$ be a strictly convex linear metric space. If $C$ is a subset of $X$  having the property that for every pair of distinct points $x$ and $y$ in $C$, there exists a sequence of points $(z_n)$ in $C$ such that
\begin{eqnarray*}
z_n=(1+t_n)y-t_nx
\end{eqnarray*}
for some $t_n\in (0,1)$ for each $n$, and $z_n\rightarrow y$ as $n\rightarrow \infty$, then $d$ is a sleek metric for $C$.
\end{theorem}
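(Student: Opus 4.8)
The plan is to argue by contradiction through the characterization in Theorem~\ref{thB}\ref{Bb}. Suppose $d$ is not sleek for $C$. Then there exist an open subset $O$ of $X$ and a point $x\in C\cap O$ for which the map $d(x,\cdot):C\cap O\to\mathbb{R}$ attains a maximum, say at $y\in C\cap O$, so that
\begin{eqnarray*}
d(x,u)\le d(x,y)\quad\text{for all}~u\in C\cap O.
\end{eqnarray*}
First I would reduce to the case $x\ne y$. If the maximum value $d(x,y)$ were $0$, then $C\cap O=\{x\}$; picking any second point $p\in C$ and applying the hypothesis so as to produce a sequence in $C$ converging to $x$ (i.e. with $x$ in the role of the limit point) yields points of $C\cap O$ distinct from $x$ for large indices, a contradiction. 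Hence the direction vector $y-x$ is nonzero.

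Next I would feed the distinct pair $x,y\in C$ into the hypothesis to obtain a sequence $(z_n)$ in $C$ with $z_n=(1+t_n)y-t_nx$, $t_n\in(0,1)$, and $z_n\to y$. Since $O$ is open and $y\in O$, the convergence $z_n\to y$ forces $z_n\in O$, and hence $z_n\in C\cap O$, for all sufficiently large $n$. For such $n$ the maximality of $y$ gives
\begin{eqnarray*}
d(x,z_n)\le d(x,y).
\end{eqnarray*}

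The crux is to contradict this using strict convexity. A direct computation gives $z_n-x=(1+t_n)(y-x)$, so translation invariance of $d$ yields $d(x,z_n)=d(0,(1+t_n)(y-x))$ and $d(x,y)=d(0,y-x)$. Applying Theorem~\ref{thA0} to the nonzero vector $y-x$, the map $t\mapsto d(0,t(y-x))$ is strictly increasing on $[0,\infty)$; since $1+t_n>1$, this gives
\begin{eqnarray*}
d(x,z_n)=d(0,(1+t_n)(y-x))>d(0,y-x)=d(x,y),
\end{eqnarray*}
which contradicts the previous display. Therefore $d$ is sleek for $C$.

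The step I expect to be the main (conceptual rather than computational) obstacle is identifying the correct input to Theorem~\ref{thA0}: the sequence hypothesis manufactures points \emph{beyond} $y$ along the ray through $x$, that is, a dilation factor $1+t_n$ strictly exceeding $1$, and it is exactly the strict monotonicity of $t\mapsto d(0,t(y-x))$ \emph{past} $t=1$ that converts ``farther along the ray'' into ``strictly larger distance.'' This is the strictly convex linear-metric analogue of the norm homogeneity used in the corresponding normed-space result, with the given sequence replacing the external-convexity construction. The remaining points---that $x\ne y$, and that the $z_n$ eventually enter $O$---are routine.
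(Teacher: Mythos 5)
Your proof is correct and follows essentially the same route as the paper's: argue by contradiction via Theorem~\ref{thB}\ref{Bb}, show that the points $z_n$ eventually lie in $C\cap O$, and contradict the maximality of $d(x,y)$ by applying Theorem~\ref{thA0} to the nonzero vector $y-x$ with dilation factor $1+t_n>1$. The only difference is that you explicitly dispose of the degenerate possibility $x=y$ (i.e., $C\cap O=\{x\}$) by using the sequence hypothesis to manufacture points of $C\cap O$ distinct from $x$, a detail the paper's proof passes over silently; this is a refinement rather than a different approach.
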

\begin{proof}
  Assume the contrary that $d$ is not sleek for $C$. By Theorem \ref{thB}\ref{Bb}, there exists an open subset $O$ of $X$ and $x,y\in C\cap O$, for which $d(x,u)\leq d(x,y)$ for all $u\in C\cap O$. Then $C\cap O\subseteq B_d[x,d(x,y)]$. Since $(X,d)$ is strictly convex, by Theorem \ref{thA0} we have $d(x,z_n)=d(0,(1+t_n)(x-y))>d(x,y)$, and so, $z_n\not\in O$ for all $n$. Let $\epsilon>0$ for which $B_d(y,\epsilon)\subseteq O$ and by the hypothesis choose $n$ large enough such that $d(y,z_n)<\epsilon$. Consequently,  $z_n\in C\cap B_d(y,\epsilon)$, and so $z_n\in C\cap O$. We then have $d(x,y)\geq d(x,z_n)>d(x,y)$,  which is a contradiction.
\end{proof}
\begin{corollary}
Let $(X,d)$ be a linear metric space having strict ball convexity.
Let $C$ be a subset of $X$  having the property that for every pair of distinct points $x$ and $y$ in $C$, there exists a sequence of points $(z_n)$ in $C$ such that
\begin{eqnarray*}
z_n=(1+t_n)y-t_nx
\end{eqnarray*}
for some $t_n\in (0,1)$ for each $n$, and $z_n\rightarrow y$ as $n\rightarrow \infty$. If $d$ is a sleek metric for $X$, then $d$ is a sleek metric for $C$.
\end{corollary}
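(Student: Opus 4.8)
The plan is to follow the same strategy as the preceding corollary for round metrics, replacing roundness by sleekness throughout. The central observation is that under strict ball convexity, sleekness of the ambient space forces strict convexity, after which Theorem \ref{STH2} applies directly to $C$.

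First I would invoke the sleek counterpart of Vasil'ev's characterization, namely that a sleek linear metric space having strict ball convexity is strictly convex; this is established in \cite{JSTD2020} and is the analogue for sleek spaces of the round-space result cited in the earlier corollary from \cite{vasilev}. Since by hypothesis $d$ is a sleek metric for $X$ and $(X,d)$ has strict ball convexity, this characterization yields at once that $(X,d)$ is strictly convex.

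Next I would verify that the hypotheses of Theorem \ref{STH2} are now in force. The space $(X,d)$ is strictly convex by the previous step, and $C$ possesses precisely the sequential property demanded in Theorem \ref{STH2}: for every pair of distinct points $x,y\in C$ there is a sequence $(z_n)$ in $C$ of the form $z_n=(1+t_n)y-t_nx$ with $t_n\in(0,1)$ and $z_n\to y$ as $n\to\infty$. Applying Theorem \ref{STH2} then gives that $d$ is a sleek metric for $C$, which is the desired conclusion.

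I do not expect a genuine obstacle here, as the argument is a short composition of two results. The only point requiring care is the appeal to the implication \emph{sleek plus strict ball convexity implies strict convexity}, which must be quoted in its sleek form rather than the round form used earlier; once this is correctly identified, the conclusion is immediate from Theorem \ref{STH2}.
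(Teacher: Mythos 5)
Your proposal is correct and matches the paper's proof exactly: the paper likewise cites the result from \cite{JSTD2020} that a sleek linear metric space with strict ball convexity is strictly convex, and then applies Theorem \ref{STH2} to $C$. Nothing is missing; the argument is the same two-step composition.
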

\begin{proof}
Let $d$ be a sleek metric for $X$. A sleek linear metric space having strict ball convexity is strictly convex (See \cite{JSTD2020}). This along with the hypothesis and Theorem \ref{STH2} yield the desired result.
\end{proof}
We have the following generalization of Theorem \ref{STH2}, whose proof is similar and we omit it.
\begin{theorem}\label{STH3}
Let $C$ be a subset of a linear metric space $(X,d)$ having the following two properties:
\begin{enumerate}[label=(\alph*)]
\item For each $x\in C$ with $x\neq 0$, the map $f_x:[0,1]\rightarrow \mathbb{R}$ where
\begin{eqnarray*}
f_x(t)=d(0,tx)~\text{for all}~ t\in [0,1]
\end{eqnarray*}
is strictly increasing,
\item For every pair of distinct points $x$ and $y$ in $C$, there exists a sequence of points $(z_n)$ of $C$ such that $z_n=(1+t_n)y-t_nx$ for some $t_n\in (0,1)$ for each $n$, and $z_n\rightarrow y$ as $n\rightarrow \infty$.
\end{enumerate}
Then $d$ is a sleek metric for $C$.
\end{theorem}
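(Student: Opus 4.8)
The plan is to argue by contraposition, following the template of the proof of Theorem \ref{STH2} but replacing every appeal to strict convexity (i.e.\ to Theorem \ref{thA0}) by a direct use of the monotonicity hypothesis (a). I would begin by assuming that $d$ is \emph{not} sleek for $C$. By Theorem \ref{thB}\ref{Bb} there is an open set $O$ in $X$ and points $x,y \in C \cap O$ with $d(x,u) \le d(x,y)$ for all $u \in C\cap O$; equivalently $C\cap O \subseteq B_d[x,d(x,y)]$, and $x \ne y$ since $d(x,y)$ is the positive maximum. I then feed the distinct pair $x,y \in C$ into hypothesis (b) to obtain a sequence $z_n = (1+t_n)y - t_n x \in C$ with $t_n \in (0,1)$ and $z_n \to y$.

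The crux is to show $d(x,z_n) > d(x,y)$ for every $n$ using only strict monotonicity of $f_w$ on $[0,1]$. Here the proof genuinely diverges from that of Theorem \ref{STH2}: there, Theorem \ref{thA0} supplies strict monotonicity of $t \mapsto d(0,t(x-y))$ on all of $[0,\infty)$, so one reads off $d(x,z_n) = d(0,(1+t_n)(x-y)) > d(0,x-y) = d(x,y)$ directly, \emph{using a parameter $1+t_n>1$}. Under hypothesis (a) that value is out of reach, so I would instead take the \emph{longer} vector $w := z_n - x = (1+t_n)(y-x)$ as the base direction. By translation invariance $d(x,z_n) = d(0,w) = f_w(1)$, while $y - x = \tfrac{1}{1+t_n}\,w$ gives $d(x,y) = f_w\!\bigl(\tfrac{1}{1+t_n}\bigr)$; since $\tfrac{1}{1+t_n} \in (0,1)$ and $w \ne 0$, property (a) applied to the direction $w$ yields $f_w\!\bigl(\tfrac{1}{1+t_n}\bigr) < f_w(1)$, i.e.\ $d(x,y) < d(x,z_n)$. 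This reparametrisation, which keeps the argument of $f_w$ inside $[0,1]$, is the one genuinely new idea and the step I expect to require the most care.

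With $d(x,z_n) > d(x,y)$ in hand, the contradiction is routine. Since $C\cap O \subseteq B_d[x,d(x,y)]$, no $z_n$ can lie in $C\cap O$. On the other hand, choosing $\epsilon>0$ with $B_d(y,\epsilon) \subseteq O$ and using $z_n \to y$, we have $z_n \in B_d(y,\epsilon) \subseteq O$ for all large $n$; as $z_n \in C$, this places $z_n \in C\cap O$, contradicting the previous sentence. Hence $d$ is sleek for $C$. The only point to watch is the application of (a) to the difference direction $w = z_n - x$: exactly as in the proofs of Theorems \ref{th4} and \ref{STH2}, one reads (a) as furnishing strict monotonicity of $f_w$ along the relevant difference directions, after which the entire argument stays within $[0,1]$.
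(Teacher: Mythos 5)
Your proof is correct, and it is worth recording that the paper itself gives no proof of this theorem: it only asserts that the proof is ``similar'' to that of Theorem \ref{STH2} and omits it. Your argument follows exactly that template (contrapositive via Theorem \ref{thB}\ref{Bb}, the sequence $(z_n)$ supplied by hypothesis (b), then the contradiction between $z_n\not\in C\cap O$ and $z_n\rightarrow y\in O$), but you correctly identified the one step where the transcription is \emph{not} mechanical: Theorem \ref{thA0} gives strict monotonicity of $t\mapsto d(0,t(x-y))$ on all of $[0,\infty)$, so the proof of Theorem \ref{STH2} may evaluate at the parameter $1+t_n>1$, whereas hypothesis (a) only controls parameters in $[0,1]$. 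Your rescaling --- taking $w=z_n-x=(1+t_n)(y-x)$ as the base direction, so that $d(x,y)=f_w\bigl(1/(1+t_n)\bigr)$ and $d(x,z_n)=f_w(1)$ are compared at parameters inside $[0,1]$ --- is precisely the fix this requires; equivalently, it shows that strict monotonicity on $[0,1]$ in every direction already implies strict monotonicity on $[0,\infty)$ in every direction, which is why the paper's hypothesis suffices. Your closing caveat is also apt and should be kept: as literally stated, hypothesis (a) applies only to vectors $x\in C$, while your proof applies it to the difference $z_n-x$, which need not lie in $C$; but this looseness is the paper's own, since its proof of Theorem \ref{th4} likewise applies the hypothesis to $f_{x-y}$ with $x,y\in C$, so the hypothesis is evidently intended to hold for the relevant difference directions (or for all of $X$). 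Modulo that shared reading, and the standing convention that the spaces considered have at least two points and no isolated points (which, via hypothesis (b), guarantees $x\neq y$ as you note), your argument is complete.
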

\begin{remark}
    In view of Theorems \ref{th4} and \ref{STH3}, it follows that a linear metric space $(X,d)$ in which the association $t\mapsto d(0,tx)$, $t\geq 0$ is strictly increasing for all nonzero $x$ in $X$ is round as well as sleek and so is every linear subspace of $(X,d)$.
\end{remark}
Now we discuss some results on roundness and sleekness for metric spaces and identify some round and sleek subsets of metric spaces in the subspace topology.

If $(X,d)$ is  a locally connected metric space, then every connected component of $X$ is open. Consequently, if $d$ is a round (resp. sleek), metric for $X$, then so is $d$ for each connected component of $X$.

In  \cite[Proposition 1]{TH}, Kiventidis proved that a metric $d$ is round for $X$ if for every pair of distinct points $x$ and $y$ in $X$, there exist two sequences $(x_n)$ and $(y_n)$ of points of $(B_d(x,d(x,y))\cap B_d(y,d(x,y)))\setminus \{x,y\}$ such that $x_n\rightarrow x$ and $y_n\rightarrow y$. If in addition, the metric space $(X,d)$ is externally convex then $d$ is a sleek metric for $X$ \cite[Proposition 3]{TH}. The following result is a generalization of  \cite[Proposition 3]{TH}.
\begin{theorem}\label{th3} Let $(X,d)$ be an externally convex metric space. If $d$ is a round metric for $X$, then $d$ is a sleek metric for $X$.
\end{theorem}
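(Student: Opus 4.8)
The plan is to use the characterization of sleekness provided by Theorem~\ref{thB}\ref{Bb}. Suppose, for contradiction, that $d$ is not sleek for $X$. Then there exist an open set $U \subseteq X$ and a point $x \in U$ such that $d(x,\cdot)$ attains a maximum on $U$; say this maximum is attained at some $y \in U$, so that $d(x,u) \leq d(x,y)$ for all $u \in U$. My aim is to manufacture, from the external convexity together with the \emph{roundness} of $d$, a point $w \in U$ with $d(x,w) > d(x,y)$, contradicting maximality.

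First I would handle the point $y$. By external convexity applied to the pair $x,y$, there exists $z \in X \setminus \{x,y\}$ with $d(x,y) + d(y,z) = d(x,z)$; in particular $d(x,z) > d(x,y)$, so $z \notin U$. The key idea is that $y$ lies on the segment (in the metric sense) from $x$ to $z$, and $y$ is interior to $U$, so points of $U$ near $y$ should be pushed slightly toward $z$ to increase their distance from $x$. To make this rigorous I would invoke the roundness of $d$ via Theorem~\ref{thB}\ref{Ba}: roundness says precisely that $d(x,\cdot)$ can have no minimum on any open set missing $x$. I would like to convert the maximality at $y$ into a contradiction by producing a sequence in $U$ approaching $y$ whose $x$-distances exceed $d(x,y)$.

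The cleanest route, and the one I expect the author follows, is to combine external convexity with the minimum-free property built into roundness. Since $y \in U$ with $U$ open, choose $\epsilon > 0$ with $B_d(y,\epsilon) \subseteq U$. Consider the closed ball $B_d[x, d(x,y)]$; maximality forces $U \subseteq B_d[x,d(x,y)]$, hence $B_d(y,\epsilon) \subseteq B_d[x,d(x,y)]$. Now I would argue that roundness implies the sphere $\{u : d(x,u) = d(x,y)\}$ cannot contain an open neighborhood of one of its points in the way that maximality demands; more precisely, I would show that the external-convexity point $z$ obtained above, together with the metric betweenness relation $d(x,y)+d(y,z)=d(x,z)$, lets me find points on the "$y$ to $z$" side arbitrarily close to $y$. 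The hypothesis that $d$ is round should guarantee that such betweenness points genuinely exist arbitrarily near $y$ (this is where roundness, i.e. $\bar B_d(x,r) = B_d[x,r]$, enters: it ensures closed balls are the closures of open balls, so the relevant intermediate points are approximable). A point $w$ on this side close enough to $y$ lies in $B_d(y,\epsilon) \subseteq U$ yet satisfies $d(x,w) > d(x,y)$, the desired contradiction.

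The main obstacle will be the interplay between the purely metric betweenness furnished by external convexity and the topological approximation furnished by roundness: unlike the linear-metric setting, there is no algebraic segment $(1+t)y - tx$ to slide along, so I must extract approximating points using only the defining equation $\bar B_d(x,r) = B_d[x,r]$ of roundness. Concretely, the delicate step is to show that there are points $w$ with $d(x,w) > d(x,y)$ \emph{and} $d(y,w) < \epsilon$ simultaneously; I would establish this by applying roundness to suitable radii $r$ slightly larger than $d(x,y)$ and using that $\bar B_d(x,r) = B_d[x,r]$ places points of distance exactly $r$ in the closure of the open ball, then locating such a point near $y$ via the betweenness relation. Once that approximating point is secured, membership in $U$ follows from $B_d(y,\epsilon)\subseteq U$ and the contradiction with maximality is immediate.
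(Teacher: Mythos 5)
Your setup is correct and matches the paper's (contrapositive via Theorem \ref{thB}\ref{Bb}, then the external-convexity point $z$ with $d(x,y)+d(y,z)=d(x,z)$, hence $z\notin U$), but the key step you propose does not work as stated. You want points $w$ near $y$ with $d(x,w)>d(x,y)$, and you propose to obtain them by applying roundness \emph{centered at $x$} with radii $r$ slightly larger than $d(x,y)$. However, the identity $\bar{B}_d(x,r)=B_d[x,r]$ only says that points at distance exactly $r$ from $x$ are limits of points at distance \emph{less} than $r$; it gives approximation from \emph{inside} the ball and never produces points of $X$ lying \emph{outside} $B_d[x,d(x,y)]$ close to the boundary point $y$. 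The statement that every neighborhood of $y$ meets the complement of $B_d[x,d(x,y)]$ is precisely the sleekness-type property $B_d^{\circ}[x,r]=B_d(x,r)$ that you are trying to prove, so this reading of your step is circular. Moreover, ``locating such a point near $y$ via the betweenness relation'' has no support: external convexity furnishes a \emph{single} point $z$ beyond $y$, possibly far from $y$; it does not furnish points beyond $y$ at arbitrarily small distances from $y$.

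The missing idea is to change the center from $x$ to $z$. Since maximality forces $U\subseteq B_d[x,d(x,y)]$ while $d(x,z)=d(x,y)+d(y,z)>d(x,y)$, we have $z\notin U$; and for any $u\in U$, if $d(z,u)<d(z,y)$ then the triangle inequality gives
\begin{eqnarray*}
d(z,x)\leq d(z,u)+d(u,x)<d(z,y)+d(y,x)=d(z,x),
\end{eqnarray*}
which is absurd. Hence $d(z,\cdot)$ attains its minimum on the open set $U$ at $y$ with $z\notin U$, and Theorem \ref{thB}\ref{Ba} then says $d$ is not round --- this is exactly the paper's proof. Equivalently, in your language: apply roundness with center $z$ and radius $d(z,y)$, so that $y\in B_d[z,d(z,y)]=\bar{B}_d(z,d(z,y))$ yields points $w\in B_d(y,\epsilon)\subseteq U$ with $d(z,w)<d(z,y)$, whence $d(x,w)\geq d(x,z)-d(z,w)>d(x,z)-d(z,y)=d(x,y)$, contradicting maximality at $y$. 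Your intuition --- push points near $y$ toward $z$ to increase their distance from $x$ --- is the right one, but the approximation must come from the ball centered at $z$, not from balls centered at $x$.
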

\begin{proof}We prove the contrapositive statement.  So, assume that the metric $d$ is not sleek for $X$. We may assume without loss of generality that $X$ has no isolated point. By Theorem \ref{thB}\ref{Bb}, there exists an open set $U$ in $X$ and $x,y\in U$ for which $d(x,u)\leq d(x,y)$ for all $u\in U$. Then $U\subseteq B_d[x,d(x,y)]$. Also, in view of the fact that $X$ has no isolated point gives $x\neq y$.  By the external convexity of $(X,d)$, there exists $z\in X\setminus\{x,y\}$ for which
\begin{eqnarray*}
d(z,y)+d(y,x)=d(z,x)
\end{eqnarray*}
and so $d(x,y)<d(x,z)$. Consequently, $z\not\in B_d[x,d(x,y)]$, and so, $z\not\in U$. Now consider the map $d(z,\cdot):U\rightarrow \mathbb{R}$. If there exists $u\in U$ for which $d(z,u)<d(z,y)$, then this along with the inequality $d(x,u)\leq d(x,y)$ and the triangle inequality yield
\begin{eqnarray*}
d(z,x)\leq d(z,u)+d(u,x)<d(z,y)+d(y,x)=d(z,x),
\end{eqnarray*}
which is absurd. So, $d(z,y)\leq d(z,u)$ for all $u\in U$. Thus, the map $d(z,\cdot):U\rightarrow \mathbb{R}$ attains its minimum value $d(z,y)$ on $U$. By Theorem \ref{thB}\ref{Ba}, the metric $d$ is not round for $X$.
\end{proof}
\begin{corollary}\label{cor2}
 A convex and externally convex complete metric space is sleek.
\end{corollary}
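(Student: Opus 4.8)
The plan is to read this statement as an immediate consequence of chaining together the two main roundness/sleekness results already established in the excerpt, namely Wong's Theorem \ref{thA} and our Theorem \ref{th3}. The hypotheses hand us exactly the two ingredients those results require: metric convexity together with completeness, and external convexity. So the strategy is a two-step deduction with no genuinely new construction needed.

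First I would invoke Theorem \ref{thA}. Since $(X,d)$ is assumed convex (that is, metrically convex in the sense of the definition in Section \ref{sec1}) and complete, Theorem \ref{thA} applies verbatim and tells us that $d$ is a round metric for $X$. The only thing to check here is that the word ``convex'' in the corollary's hypothesis matches the ``metrically convex'' hypothesis of Theorem \ref{thA}, which it does by definition, so this step is immediate.

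Next I would feed this conclusion into Theorem \ref{th3}. We now know $(X,d)$ is externally convex (by hypothesis) and round (by the previous step). Theorem \ref{th3} states precisely that an externally convex metric space on which $d$ is round must have $d$ sleek. Applying it yields that $d$ is a sleek metric for $X$, which is the assertion of the corollary.

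I do not expect a real obstacle here, since both component theorems are already available to me; the ``hard part'' is purely verifying that the hypotheses line up, and in particular that external convexity is used only in the second step while completeness and convexity are consumed entirely in the first. If I wanted to be slightly more self-contained I could remark that completeness is not needed in the second step, so the only role of completeness in the whole argument is to supply roundness via Theorem \ref{thA}; but this observation is optional and the clean two-line chain above already closes the proof.
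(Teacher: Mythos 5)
Your proposal is correct and follows exactly the paper's own proof: Theorem \ref{thA} gives roundness from metric convexity and completeness, and Theorem \ref{th3} then converts roundness plus external convexity into sleekness. Nothing is missing.
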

\begin{proof}
Let $(X,d)$ be a convex and externally convex complete metric space.
By Theorem \ref{thA}, $d$ is a round metric for $X$. Since $(X,d)$ is externally convex, by Theorem \ref{th3}, $d$ is a sleek metric for $X$.
\end{proof}
\begin{corollary}\label{cor3}
A convex and (metrically) externally convex subset of a strictly convex linear metric space is sleek in the subspace topology.
\end{corollary}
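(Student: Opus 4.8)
The plan is to avoid a direct sequence construction (as in Theorem \ref{STH2}) and instead chain together two results already proved in this section: Theorem \ref{th2}, which converts linear convexity into roundness inside a strictly convex linear metric space, and Theorem \ref{th3}, which upgrades roundness to sleekness once (metric) external convexity is available. The phrase ``convex and (metrically) externally convex subset'' should be parsed as follows: $C$ is convex as a subset of the linear metric space $X$ in the sense of definition (g) (so $(1-t)x+ty\in C$ whenever $x,y\in C$ and $t\in[0,1]$), while $(C,d)$ is simultaneously an externally convex metric space in the metric sense of definition (d). This is exactly the pairing needed to apply the two theorems in succession, and it parallels the proof of Corollary \ref{cor2}.

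First I would invoke Theorem \ref{th2}. Since $C$ is a convex subset of the strictly convex linear metric space $(X,d)$, that theorem gives immediately that $d$ is a round metric for $C$; equivalently, $(C,d)$ is round in the subspace topology. Here the linear convexity of $C$ is precisely the hypothesis Theorem \ref{th2} consumes.

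Next I would view $(C,d)$ as a metric space in its own right. By hypothesis $C$ is metrically externally convex, so $(C,d)$ is an externally convex metric space. Combining this with the roundness of $d$ for $C$ just obtained, I would apply Theorem \ref{th3} to the metric space $(C,d)$ to conclude that $d$ is a sleek metric for $C$, that is, $C$ is sleek in the subspace topology. In this argument Theorem \ref{th2} plays exactly the role that Theorem \ref{thA} (convex complete $\Rightarrow$ round) played in Corollary \ref{cor2}, with the completeness hypothesis there replaced by strict convexity of the ambient linear metric space here.

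I do not expect a genuine obstacle: the work has already been carried out in Theorems \ref{th2} and \ref{th3}, and the proof is simply their composition. The only point demanding care is keeping the two notions of convexity straight, since Theorem \ref{th2} requires \emph{linear} convexity while Theorem \ref{th3} requires \emph{metric} external convexity of $(C,d)$, and the hypothesis supplies precisely both. One should also remark that a convex subset containing more than one point contains an entire line segment and therefore has no isolated points, so that roundness and sleekness are meaningful for $C$; in the trivial case where $C$ is a singleton the assertion holds vacuously.
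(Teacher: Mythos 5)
Your proof is correct and is essentially identical to the paper's own argument: Theorem \ref{th2} yields roundness of $d$ on $C$, and Theorem \ref{th3} applied to the externally convex metric space $(C,d)$ upgrades this to sleekness. The extra remarks on parsing the two convexity notions and on the singleton case are fine but not needed.
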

\begin{proof}
Let $C$ be a convex and externally convex subset of a linear metric space $(X,d)$. By Theorem \ref{th2}, the metric $d$ is round for $C$. Since $(C,d)$ is externally convex, by Theorem \ref{th3} the metric $d$ is sleek for $C$.
\end{proof}
Now we prove the following characterization of round and sleek metrizable spaces via their open subsets.
\begin{theorem}\label{th6}
Let $(X,d)$ be a metric space having no isolated point.
Then $d$ is a round (resp. sleek) metric for $X$ if and only if so is $d$ for each proper nonempty open subset of $X$.
\end{theorem}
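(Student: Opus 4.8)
The forward implication requires almost nothing: it is already recorded at the start of this section that an open subset of a round (resp.\ sleek) metric space is again round (resp.\ sleek), and a proper nonempty open subset of $X$ is in particular such an open subset. So the plan is to concentrate on the converse and to prove it in contrapositive form, reading the failure of roundness or sleekness off the extremum criterion of Theorem~\ref{thB}.

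Suppose first that $d$ is not round for $X$. By Theorem~\ref{thB}\ref{Ba} there is an open set $U$ in $X$ and a point $x\in X\setminus U$ such that $d(x,\cdot)\colon U\to\mathbb{R}$ attains a minimum, say at $y\in U$, so that $d(x,u)\ge d(x,y)$ for all $u\in U$; here $x\ne y$ since $x\notin U\ni y$. Because $X$ has no isolated point it cannot be finite, so I can choose a point $z\in X\setminus\{x,y\}$ and set $W=X\setminus\{z\}$. Then $W$ is a proper nonempty open subset of $X$ (the singleton $\{z\}$ being closed) that contains both $x$ and $y$. The idea is now to transplant the witness of non-roundness from $U$ into $W$: I put $V=W\cap U$, which is open in the subspace $W$, satisfies $y\in V$ and $x\in W\setminus V$, and, since $V\subseteq U$, the minimum of $d(x,\cdot)$ over $U$ is still realized at $y\in V$. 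Hence $d(x,\cdot)\colon V\to\mathbb{R}$ attains a minimum, and Theorem~\ref{thB}\ref{Ba} applied to $(W,d)$ shows that $d$ is not round for $W$, contradicting the assumption that $d$ is round for every proper nonempty open subset.

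The sleek case runs in exact parallel. If $d$ is not sleek for $X$, then Theorem~\ref{thB}\ref{Bb} supplies an open set $U$ and a point $x\in U$ such that $d(x,\cdot)\colon U\to\mathbb{R}$ attains a maximum, say at $y\in U$; again $x\ne y$, for otherwise the maximum would be $0$ and $U=\{x\}$ would make $x$ isolated. Choosing $z\in X\setminus\{x,y\}$ and $W=X\setminus\{z\}$ as before, the set $V=W\cap U$ now contains both $x$ and $y$, and $d(x,\cdot)\colon V\to\mathbb{R}$ attains its maximum at $y\in V$; Theorem~\ref{thB}\ref{Bb} then gives that $d$ is not sleek for $W$. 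I expect the only delicate points to be bookkeeping: checking that $W=X\setminus\{z\}$ is a genuine proper nonempty open set carrying the relevant points, that $V$ is open in the subspace $W$, and --- most importantly --- that the extremum survives the passage from $U$ to $V$, which it does precisely because the extremizer $y$ is retained inside $W$. The no-isolated-point hypothesis is used only to guarantee that a deletable point $z$ exists.
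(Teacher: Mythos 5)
Your proof is correct, and its skeleton matches the paper's---both non-trivial directions are handled contrapositively by transporting the extremum witness supplied by Theorem \ref{thB} into a suitable proper open subset---but your construction of that subset is simpler and more uniform than the paper's. The paper proceeds case by case: for roundness it takes $W=U\cup V$, where $V$ is an open neighbourhood of $x$ disjoint from $U$, deleting a point $z\in V\setminus\{x\}$ in case $U\cup V=X$; for sleekness it takes $W=U$ itself, deleting a point $z\in U\setminus\{x,y\}$ in case $U=X$. You instead delete one spare point globally, setting $W=X\setminus\{z\}$ with $z\notin\{x,y\}$ and restricting the witness to $V=U\setminus\{z\}$; the same recipe serves both cases, and it makes transparent that the no-isolated-point hypothesis enters only to guarantee that such a $z$ exists (your observation that a nonempty metric space without isolated points is infinite is the right justification). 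The other difference is the forward implication: you quote the known fact from \cite{TH} (recorded at the start of Section \ref{sec2}) that open subsets inherit roundness and sleekness, while the paper reproves it directly via Theorem \ref{thB} in one step---a witness open set inside the open subspace $O$ is already open in $X$ because $O$ is open, so the extremum persists and $X$ itself fails to be round (resp.\ sleek). Your citation is legitimate provided the quoted result is understood, as intended in \cite{TH} and as the paper's own argument for this direction verifies, to concern the restriction of the same metric $d$ to the open subset rather than merely the existence of some round (resp.\ sleek) metric on it.
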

\begin{proof} We prove the contrapositive of the statements of the theorem. First we prove the theorem for the case of roundness.

Let there be a proper nonempty open subset $O$ of $X$ such that $d$ is not a round metric for $O$. By Theorem \ref{thB}\ref{Ba}, there exists an open subset $U$ of $X$ with  $x\in O\setminus U$, $y\in O\cap U$ such that $d(x,u)\leq d(x,y)$ for all $u\in O\cap U$. Since $O\cap U$ is an open subset of $X$ such that $x\in X\setminus (O\cap U)$, $y\in O\cap U$, and $d(x,u)\leq d(x,y)$ for all $u\in O\cap U$, by Theorem \ref{thB}\ref{Ba}, the metric $d$ is not round for $X$. Conversely, assume that $d$ is not round for $X$. By Theorem \ref{thB}\ref{Ba}, there exists an open set $U$ in $X$, $x\in X\setminus U$, and $y\in U$ such that $d(x,y)\leq d(x,u)$ for all $u\in U$. Then $U$ is a proper subset of $X$ and since $X$ has no isolated point, there exists an open set $V$ containing $x$ such that $V$ is disjoint from $U$. Since $x$ is not an isolated point of $X$, we can choose a point $z\in V$ with $z\neq x$.  Now define
\begin{eqnarray*}
W=\begin{cases}
U\cup V, &~\text{if}~X\neq U\cup V;\\
U\cup V\setminus\{z\}, &~\text{if}~X=U\cup V.
\end{cases}
\end{eqnarray*}
Then $W$ is a proper nonempty open subset of $X$, and $d$ is not a round metric for $W$, since $U$ is an open subset of $W$ with $x\in W\setminus U$ and $y\in U$ for which $d(x,y)\leq d(x,u)$ for all $u\in U$.

Now we prove the theorem for the case of sleekness. If $d$ is not a sleek metric for $X$, then in view of Theorem \ref{thB}\ref{Bb}, there exists an open set $U$ in $X$ and $x,y\in U$ such that $d(x,u)\leq d(x,y)$ for all $u\in U$. Since $X$ has no isolated point, we can choose a point $z\in U\setminus \{x,y\}$. Let
\begin{eqnarray*}
V=\begin{cases}
U, &~\text{if}~X\neq U;\\
U\setminus\{z\}, &~\text{if}~X=U.
\end{cases}
\end{eqnarray*}
Then $V$ is a proper nonempty open subset of $X$ such that $x,y\in V$ and $d(x,u)\leq d(x,y)$ for all $u\in V$. Consequently, by Theorem \ref{thB}\ref{Bb}, the metric $d$ is not sleek for $V$. Conversely, if the metric $d$ is not sleek for some proper nonempty open subset $V$ of $X$, then by Theorem \ref{thB}\ref{Bb}, there exists an open set $O$ in $X$ and $v\in V\cap O$ for which the map $d(v,\cdot):V\cap O\rightarrow \mathbb{R}$ has a maximum. Observe that $V\cap O$ is an open subset of $X$. So, again by Theorem \ref{thB}\ref{Bb}, the metric $d$ is not sleek for $X$.
\end{proof}
\begin{theorem}\label{th7} Let $X$ be a locally connected metrizable space having a separation. Then $X$ is round (resp. sleek) if and only if
for every pair of distinct connected components $C_1$ and $C_2$ of $X$, the subset $C_1\cup C_2$ of $X$ is a round (resp. sleek) in the subspace topology.
\end{theorem}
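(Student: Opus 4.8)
The plan is to reduce both equivalences to the attainment criteria of Theorem~\ref{thB}, exploiting the single structural fact supplied by local connectedness: every connected component of $X$ is open, so the union $C_1\cup C_2$ of any two components is open in $X$. Consequently a subset of $C_1\cup C_2$ is open in the subspace topology of $C_1\cup C_2$ if and only if it is open in $X$, which means Theorem~\ref{thB} transfers freely between $X$ and $C_1\cup C_2$. I would prove both the round and the sleek versions by contraposition, and the two arguments run in complete parallel, the only difference being whether the witnessing point lies inside or outside the relevant open set.

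For the forward implication, suppose $X$ is round (resp. sleek) and fix distinct components $C_1,C_2$. Since $C_1\cup C_2$ is open in $X$, and since an open subset of a round (resp. sleek) metric space is round (resp. sleek), the subspace $C_1\cup C_2$ is round (resp. sleek). Equivalently, via Theorem~\ref{thB}: any open set of $C_1\cup C_2$ together with a point realizing a minimum (resp. maximum) of the distance map is, by openness of $C_1\cup C_2$ in $X$, an open set of $X$ with the same extremal behaviour, so a failure for the pair would force a failure for $X$.

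The substantive direction is the converse, which I would establish in contrapositive form. Treat sleekness first: by Theorem~\ref{thB}\ref{Bb} there are an open set $U\subseteq X$ and $x\in U$ with $d(x,u)\le d(x,y)$ for all $u\in U$, where $y\in U$ realizes the maximum. Let $C_x$ and $C_y$ be the components of $x$ and $y$. If $C_x\ne C_y$, put $C_1=C_x$, $C_2=C_y$ and take $U'=U\cap(C_1\cup C_2)$; then $U'$ is open in $C_1\cup C_2$, contains $x$, and since $U'\subseteq U$ the map $d(x,\cdot)$ still attains its maximum $d(x,y)$ on $U'$ at $y$, so Theorem~\ref{thB}\ref{Bb} shows $C_1\cup C_2$ is not sleek. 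If instead $C_x=C_y=:C$, then $U\cap C$ already witnesses non-sleekness of the single component $C$, and here I invoke the hypothesis that $X$ has a separation, which guarantees a second component $C'\ne C$; then $U\cap C\subseteq C\cup C'$ serves as the witness for the pair $C_1=C$, $C_2=C'$. The roundness case is identical, now using Theorem~\ref{thB}\ref{Ba}: one obtains an open $U$ and $x\in X\setminus U$ with $y\in U$ realizing the minimum of $d(x,\cdot)$, and restricting to $U\cap(C_x\cup C_y)$ (or to $U\cap C$ together with a second component supplied by the separation) produces a pair failing roundness, the only bookkeeping change being that $x$ now lies outside the witnessing open set rather than inside it.

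The two places where the hypotheses are genuinely used, and which I would check carefully, are these: local connectedness is exactly what makes $C_1\cup C_2$ open in $X$, so that relative openness and openness coincide for its subsets and Theorem~\ref{thB} applies on both sides; and the existence of a separation is precisely what rescues the degenerate case in which the extremal configuration is confined to a single component, by furnishing a disjoint second component to pair it with. Beyond these, the work is the routine verification that restricting $U$ to the chosen pair of components preserves both openness and the attainment of the extremum. I expect the single-component case to be the only real obstacle, and it is dispatched entirely by the separation hypothesis.
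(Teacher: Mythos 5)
Your proposal is correct, and its skeleton is the same as the paper's: both directions are reduced to the attainment criteria of Theorem~\ref{thB}, with local connectedness used only to make each component (hence the union of any two components) open, and the forward implication disposed of by the fact that open subsets inherit roundness (resp.\ sleekness). The genuine difference is in the converse, and it is in your favour. The paper, after extracting witnesses $x\in X\setminus O$, $y\in O$ from Theorem~\ref{thB}\ref{Ba}, passes to disjoint connected open neighbourhoods $U\ni x$, $V\ni y$ (invoking Hausdorffness) and then takes the components $C_1\ni x$, $C_2\ni y$; but disjointness of $U$ and $V$ does not make $C_1$ and $C_2$ distinct, so the paper's argument tacitly assumes the two witnesses lie in different components, and it never deploys the separation hypothesis to cover the remaining case. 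Your case split does exactly that: when both witnesses lie in a single component $C$, you pair $C$ with a second component $C'$ (which exists precisely because $X$ has a separation) and note that the restricted open set still witnesses failure of roundness (resp.\ sleekness) for $C\cup C'$ via Theorem~\ref{thB}. Your version is also leaner in the generic case, since intersecting the witnessing open set with $C_1\cup C_2$ directly makes the connected neighbourhoods and the Hausdorff argument unnecessary. In short, your proposal is not merely correct; it completes the degenerate single-component case that the paper's own proof glosses over.
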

\begin{proof}
If $X$ is locally connected metrizable spaces, then each component of $X$ is open and so is the union of any two components. Consequently, if $X$ is round (resp. sleek) then in view of Theorem \ref{th6}, union of any two components is  a round (resp. sleek) in the subspace topology.

We prove the contrapositive of the converse part in the case of roundness. Let $d$ be any metric that induces the topology of $X$, and suppose that $d$ is not a round metric for $X$. By Theorem \ref{thB}\ref{Ba}, there exists an open subset $O$ of $X$, $x\in X\setminus O$ and $y\in O$ such that $d(x,y)\leq d(x,u)$ for all $u\in O$. Since $X$ is locally connected, there exist connected open sets $U$ and $V$ containing $x$ and $y$ respectively, such that $U\subseteq O$ and $V\subseteq O$. Then $U\cup V\subseteq O$. Since $X$ is Hausd\"orff, we can assume without loss of generality that $U$ and $V$ are disjoint. Consequently, if $C_1$ and $C_2$ are the connected components of $X$ containing the points $x$ and $y$ respectively, then $U\subseteq C_1$ and $V\subseteq C_2$. Let $C=C_1\cup C_2$. Then $C$ is open in $X$, and so, $V$ is an open subset of $C$ such that $x\in U=C\setminus V$, $y\in V$ and $d(x,y)\leq d(x,u)$ for all $u\in U$. This in view of Theorem \ref{thB}\ref{Ba} proves that the metric $d$ is not round for $C$. This proves the converse part in the case of roundness.

The converse in the case of sleekness can be proved following the lines of the proof for the case of roundness and so, we omit the proof.
\end{proof}
\begin{remark}
It is worth mentioning that in a locally connected metrizable space $X$, even if every connected component of $X$ is round (resp. sleek), the space $X$ need not be round (resp. sleek). For example, no metric can be round for the locally connected subspace $X=[0,1]\cup [2,3]$ of real line, whereas each of the connected components $[0,1]$ and $[2,3]$ of $X$ is round. The corresponding example for the case of sleekness can be seen in \cite[Example 5]{JSTD2022}.
\end{remark}
\begin{definition}
If the metric space $(X,d)$ is convex, then for any pair of distinct points $x$ and $y$ in $X$ we define  the segment $S(x,y)$ between $x$ and $y$ by
\begin{eqnarray*}
S(x,y)=\{z\in X\setminus\{x,y\}~|~d(x,z)+d(z,y)=d(x,y)\}.
\end{eqnarray*}
Similarly, if the metric space $(X,d)$ is externally convex, then for any pair of distinct points $x$ and $y$ in $X$ we define
\begin{eqnarray*}
S_c(x,y)&=&\{z\in X\setminus\{x,y\}~\mid~d(x,y)+d(y,z)=d(x,z)\}\\
&&\cup\{z'\in X\setminus\{x,y\}~\mid~~d(y,x)+d(x,z')=d(y,z')\}.
\end{eqnarray*}
\end{definition}
A normed linear space $(X,\|\cdot\|)$ is convex as well as externally convex. In such a space if the metric $d$ is induced on $X$ by the norm $\|\cdot\|$, then $S(x,y)\cap B_d(x,r)\neq \emptyset$ and $S_c(x,y)\cap B_d(x,r)\neq \emptyset$ for all pair of distinct points $x$ and $y$ in $X$ and $r>0$. These intersection properties when viewed in convex and externally convex metric spaces yield roundness and sleekness of such metric spaces, respectively.
\begin{theorem}\label{th5} Let $(X,d)$ be a metric space.
 \begin{enumerate}[label=(\alph*)]
\item If $(X,d)$ is convex and has the property that for every pair of distinct points $x$ and $y$ in $X$, the set  $S(x,y)\cap B_d(y,r)$ is nonempty for all $r>0$, then the metric $d$ is round for $X$.
\item If $(X,d)$ is externally convex and has the property that for every pair of distinct points $x$ and $y$ in $X$, the set  $S_c(x,y)\cap B_d(y,r)$ is nonempty for all $r>0$, then the metric $d$ is sleek for $X$.
 \end{enumerate}
\end{theorem}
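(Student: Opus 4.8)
The plan is to prove both parts through their contrapositives, using the characterization of non-round and non-sleek metrics supplied by Theorem \ref{thB}. Throughout I would invoke the standing assumption that $X$ has no isolated point, so that roundness and sleekness are meaningful. In each case the hypothesis that the relevant segment meets every ball $B_d(y,r)$ will be used to produce, inside a prescribed neighbourhood of $y$, a point of the segment whose distance to $x$ violates the extremal property guaranteed by Theorem \ref{thB}.

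For part (a), I would assume $d$ is not round for $X$. By Theorem \ref{thB}\ref{Ba} there are an open set $U$ and a point $x\in X\setminus U$ such that $d(x,\cdot)$ attains a minimum on $U$, say at $y\in U$, so $d(x,y)\le d(x,u)$ for all $u\in U$; since $x\notin U$ and $y\in U$ we have $x\neq y$. I would then choose $\epsilon>0$ with $B_d(y,\epsilon)\subseteq U$ and use the hypothesis to pick $z\in S(x,y)\cap B_d(y,\epsilon)$, whence $z\in U$ and $d(x,z)+d(z,y)=d(x,y)$ with $z\neq y$. Because $d(z,y)>0$, this forces $d(x,z)<d(x,y)$, contradicting the minimality of $d(x,y)$ on $U$; hence $d$ is round.

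For part (b), I would assume $d$ is not sleek and apply Theorem \ref{thB}\ref{Bb} to get an open set $U$ and a point $x\in U$ at which $d(x,\cdot)$ attains a maximum, say at $y\in U$, with $d(x,u)\le d(x,y)$ for all $u\in U$; the absence of isolated points rules out $U=\{x\}$ and hence forces $x\neq y$. The crucial refinement here is the choice of radius: I would take $\epsilon$ with $0<\epsilon\le d(x,y)$ and $B_d(y,\epsilon)\subseteq U$, and then extract $z\in S_c(x,y)\cap B_d(y,\epsilon)$ from the hypothesis. The main obstacle is that $S_c(x,y)$ contains two kinds of points, those ``beyond $y$'' satisfying $d(x,y)+d(y,z)=d(x,z)$ and those ``beyond $x$'' satisfying $d(y,x)+d(x,z)=d(y,z)$, and only the first kind yields the contradiction. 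A point of the second kind obeys $d(y,z)\ge d(y,x)=d(x,y)\ge\epsilon$, which is incompatible with $z\in B_d(y,\epsilon)$, so the smallness of $\epsilon$ forces $z$ to be of the first kind. For such $z$, since $z\neq y$ gives $d(y,z)>0$, I obtain $d(x,z)=d(x,y)+d(y,z)>d(x,y)$, contradicting the maximality of $d(x,y)$ on $U$; hence $d$ is sleek. I expect the bookkeeping on the two cases in $S_c(x,y)$, controlled by the inequality $\epsilon\le d(x,y)$, to be the only nontrivial point.
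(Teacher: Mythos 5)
Your proof is correct and follows essentially the same route as the paper: both parts argue via the characterization in Theorem \ref{thB}, producing a point of the segment inside a small ball around $y$ whose distance to $x$ violates the extremal property of $d(x,y)$. In part (b), however, your treatment is more careful than the paper's own. The paper asserts outright that $S_c(x,y)\cap V=\emptyset$, where $V$ is the open set furnished by Theorem \ref{thB}\ref{Bb}; but for points $z'$ of the second kind, those satisfying $d(y,x)+d(x,z')=d(y,z')$, the resulting inequality $d(y,z')>d(x,y)$ does not by itself exclude $z'$ from $V$, since membership in $V$ is controlled by the distance to $x$ (one only knows $V\subseteq B_d[x,d(x,y)]$), and such a $z'$ can lie arbitrarily close to $x$. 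What is true is only that these points are excluded from any ball $B_d(y,\delta)$ with $\delta\leq d(x,y)$ --- exactly the constraint $\epsilon\leq d(x,y)$ you impose on your radius. So your two-case bookkeeping is not an optional refinement: it is what makes the argument airtight, and it quietly repairs the one soft spot in the paper's proof.
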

\begin{proof} We prove the contrapositive of each of the statements of the theorem.

$(a)$ Assume that $d$ is not a round metric for $X$. By Theorem \ref{thB}\ref{Ba}, there exists an open set $U$ in $X$ and $x\in X\setminus U$ with $y\in U$ for which $d(x,y)\leq d(x,u)$ for all $u\in U$. Since $x\neq y$ and by definition  $d(x,z)+d(z,y)=d(x,y)$ for all $z\in S(x,y)$ where $z\neq x,y$, we must have $d(x,z)<d(x,y)$. Consequently, $z\not\in U$, and so, $S(x,y)\cap U=\emptyset$. Since $U$ is open and $y\in U$, there exists $\epsilon>0$ for which $B_d(y,\epsilon)\subseteq U$. It then follows that $S(x,y)\cap B_d(y,\epsilon)=\emptyset$.

$(b)$ Assume that $d$ is not a sleek metric for $X$. By Theorem \ref{thB}\ref{Bb}, there exists an open set $V$ in $X$ and $x,y\in V$ for which $d(x,v)\leq d(x,y)$ for all $v\in V$. Without loss of generality, assume that $X$ has no isolated point. Then $X$ has at least two distinct elements say $x$ and $y$. Also,
\begin{eqnarray*}
d(x,y)+d(y,z)=d(x,z);~d(y,x)+d(x,z')=d(y,z')
\end{eqnarray*}
for  $z,~z'\in S_c(x,y)$, and so, we must have $d(x,y)<\min\{d(x,z),~d(y,z')\}$. Consequently, $S_c(x,y)\cap V=\emptyset$. Since $V$ is open and $y\in V$, there exists $\delta>0$ for which $B_d(y,\delta)\subseteq V$. Thus, $S_c(x,y)\cap B_d(y,\delta)=\emptyset$.
\end{proof}
\section{Examples} \label{sec3}
\begin{example}
The unit circle $\s^1$ is round in the subspace topology of $\mathbb{R}^2$. To see this, let $\gamma:[0,1]\rightarrow \mathbb{R}^2$ be such that
\begin{eqnarray*}
\gamma(t)=(\cos 2\pi t,\sin 2\pi t),~t\in [0,1].
\end{eqnarray*}
Then $\gamma$ is a smooth parametrization of $\s^1$. In fact if we define
\begin{eqnarray*}
d_{\s^1}(\gamma(t),\gamma(s))=\min\Bigl\{\mid t-s\mid,1-\mid t-s\mid\Bigr\},~t,s\in [0,1],
\end{eqnarray*}
then $d_{\s^1}$ is a metric on $\s^1$ inducing the subspace topology, such that  $(\s^1,d_{\s^1})$ is a convex metric space. Also, $\s^1$ is compact, and hence complete. So, by Theorem \ref{thA}, the metric $d_{\s^1}$ is round for $\s^1$. Thus, the unit circle $\s^1$ is a round subspace of $\mathbb{R}^2$.

Since $\s^1$ is compact, no metric on $\s^1$ can be sleek (See \cite[Corollary 5(b)]{JSTD2022}). So, in view of Theorem \ref{th3}, the subspace $\s^1$ is not externally convex with respect to any metric  that induces the topology which $\s^1$ inherits as a subspace of $\mathbb{R}^2$.

More generally, let $\gamma:[a,b]\rightarrow \mathbb{R}^n$ be a smooth curve in $\mathbb{R}^n$, that is, the function $\gamma$ is continuously differentiable on $[a,b]$. Then $\gamma$ is rectifiable and the length $\ell_\gamma(\gamma(t),\gamma(s))$ of $\gamma$ between the points $\gamma(t)$ and $\gamma(s)$, $t,s\in [a,b]$ is defined by
\begin{eqnarray*}\label{e1}
\ell_\gamma(\gamma(t),\gamma(s))=\Bigl|\int_t^s \|\gamma'(u)\|du\Bigr|.
\end{eqnarray*}
Now if $A$ is a smooth surface in $\mathbb{R}^n$ and we define $d_A:A\times A\rightarrow \mathbb{R}$ as the minimum over the lengths of all paths between a given pair of points, that is,
\begin{eqnarray*}
d_A(x,y)=\inf_{\gamma} \{\ell(\gamma)~|~\gamma ~\text{is a smooth curve joining the points }~x~\text{and}~y~\text{in A}\},
\end{eqnarray*}
then $d_A$ is a metric on the subspace $A$, such that $(A,d_A)$ is a convex metric space. If $A$ is compact, then in view of Theorem \ref{thA}, $d_A$ is a round metric for $A$. 
\end{example}
\begin{example}
The set $\mathscr{C}[0,1]$  of all continuous real-valued functions defined on $[0,1]$ equipped with the $\sup$-norm, that is,
$\|f\|=\sup_{t\in [0,1]}|f(t)|$  for all $f\in \mathscr{C}[0,1]$,
is a Banach space. For any $c\in [0,1]$, let
\begin{eqnarray*}
M_c=\{f\in \mathscr{C}[0,1]~\mid~f(0)=c\}.
\end{eqnarray*}
For any $f,g\in M_c$ and $t\in \mathbb{R}$, clearly $(1-t)f+tg\in \mathscr{C}[0,1]$, and also,
\begin{eqnarray*}
(1-t)f(0)+tg(0)=c.
\end{eqnarray*}
It follows that $(1-t)f+tg\in M_c$ for all $t\in \mathbb{R}$. So, by Theorem \ref{th1}, the subset $M_c$ of $\mathscr{C}[0,1]$ is round in the subspace topology, and by Theorem \ref{STH2} the subset $M_c$ is sleek in the subspace topology.

Also, the subset $D\subset \mathscr{C}[0,1]$ consisting of all $f$ with
\begin{eqnarray*}
\sup_{t\in[0,1]}\mid f(t)\mid\leq 1
\end{eqnarray*}
is convex which by Theorem \ref{th1} is round in the subspace topology.
\end{example}
\begin{example}
The product topology of the countably infinite product $\mathbb{R}^\omega$ of real line with itself is metrizable. In fact the product topology  of $\mathbb{R}^\omega$ is recovered from the following translation invariant metric $D$.
\begin{eqnarray*}
D((x_n),(y_n))=\sum_{n=1}^\infty 2^{-n}\frac{|x_n-y_n|}{1+|x_n-y_n|},~(x_n),(y_n)\in \mathbb{R}^\omega.
\end{eqnarray*}
The linear metric space  $(\mathbb{R}^\omega,D)$ is complete. Also, for any nonzero real number $c$, the map defined by $t\mapsto \frac{t|c|}{1+t|c|}$, $t>0$ is strictly increasing and continuous. Consequently, for $(x_n)\neq 0$ and $t\in [0,1]$,  we have
\begin{eqnarray*}
D(0,t(x_n))=D(0,(tx_n))=\sum_{n=1}^\infty 2^{-n}\frac{t|x_n|}{1+t |x_n|},
\end{eqnarray*}
which shows that the map defined by $t\mapsto D(0,t(x_n))$ is strictly increasing  for $t\in [0,1]$.
By Theorem \ref{th4}, the linear metric space $(\mathbb{R}^\omega,D)$ is round. This conclusion also follows  from a result  of Nathanson \cite{Na} that a countable product of round metrizable spaces is round.

Now let $\mathbb{R}^\infty$ be the set of all those sequences of real numbers each of which is eventually zero. Observe that if  $(x_n)$ and $(y_n)$ are sequences of real numbers each of which is eventually zero, then so is the sequence $((1-t)x_n+ty_n)$ for all $t\in \mathbb{R}$. By Theorem \ref{th4}, the subset $\mathbb{R}^\infty$ of $\mathbb{R}^\omega$ is round in the subspace topology.

Since $\mathbb{R}^\infty$ has no isolated point, by Theorem \ref{STH2} the subset $\mathbb{R}^\infty$ of $\mathbb{R}^\omega$ is sleek in the subspace topology.
\end{example}
\begin{example}
The function space $L^p[0,1]$ for $0<p<1$ is a non-normable complete linear metric space with the translation invariant metric $d$, where
\begin{eqnarray*}
d(f,g) &=& \int_{[0,1]}|f-g|^p,~\text{for all}~f,g\in  L^p[0,1].
\end{eqnarray*}
Observe that for all $t\in [0,1]$ and $0\neq f\in L^p[0,1]$, we have
\begin{eqnarray*}
d(0,tf) &=& t^p \int_{[0,1]}|f|^p,
\end{eqnarray*}
which shows that the map defined by $t\mapsto d(0,tf)$, $0\leq t\leq 1$  is strictly increasing, since so is the map defined by $t\mapsto t^p$. Consequently, by Theorem \ref{th4}, the metric $d$ is round for $L^p[0,1]$ as well as each  convex subset of $L^p[0,1]$.
Here, we note that the linear metric space $(L^p[0,1],d)$ is not ball convex.

Now for $f,g\in L^p[0,1]$, $0<p<1$, if we define $h_n=(1+1/n)g-(1/n)f$ for each positive integer $n>1$, then
\begin{eqnarray*}
d(h_n,g) &=& \Bigl(\frac{1}{n^p} \int_{[0,1]}|f-g|^p\Bigr)\rightarrow 0~\text{as}~n\rightarrow \infty.
\end{eqnarray*}
So, by Theorem \ref{STH3}, the metric $d$ is sleek for $L^p[0,1]$ as well as every linear subspace of $L^p[0,1]$.
\end{example}
\begin{example}
Consider the subset $X=\s^1\cup (\{2\}\times\mathbb{R})$ of $\mathbb{R}^2$ in the subspace topology. Here, the subset $\s^1$ is open in $X$ but never sleek. This in view of Theorem \ref{th6} shows that no metric for $X$ can be sleek.
\end{example}
\subsection*{Compliance with Ethical Standards}
The authors declare that they have no conflict of interest.

\end{document}